\newtheorem{theorem}{Theorem}[section]
\newtheorem{corollary}[theorem]{Corollary}
\newtheorem{proposition}[theorem]{Proposition}
\newtheorem{lemma}[theorem]{Lemma}
\newtheorem*{theorem*}{Theorem}
\newtheorem*{proposition*}{Proposition}
\theoremstyle{remark}
\newtheorem{remark}[theorem]{Remark}
\theoremstyle{definition}
\newtheorem{definition}[theorem]{Definition}
\newcommand{\de}{\operatorname{d}}
\newcommand{\sG}{\mathscr{G}}
\DeclareMathOperator{\e}{\mathbb{E}}
\DeclareMathOperator{\p}{\mathbb{P}}
\DeclareMathOperator{\T}{\mathbb{T}_d}
\newcommand{\pion}{\partial^{\rm pion}}
\title{The Second Phase Transition of the Contact Process\\ on a Random Regular Graph}
\author[ ]{John Fernley \orcidlink{0000-0002-6635-4341}}
\affil[ ]{Centre for Research in Statistical Methodology, Department of Statistics,}
\affil[ ]{University of Warwick,
Coventry,
%CV4 7AL,
United Kingdom}
\affil[ ]{\href{mailto:john.fernley@warwick.ac.uk}{\rm john.fernley@warwick.ac.uk}}
\date{\today}
\begin{document}

\maketitle

\begin{abstract}
\noindent
The regular tree corresponds to the random regular graph as its local limit. For this reason the famous double phase transition of the contact process on regular tree has been seen to correspond to a phase transition on the large random regular graph, at least at the first critical value.

\noindent
In this article, we find a phase transition on that large finite graph at the second critical value: between linear reinfections and reinfections following a long healthy period.

{\footnotesize
\vspace{1em}
\noindent
\emph{2020 Mathematics Subject Classification:} Primary 82C22, Secondary 05C80, 60K35

\noindent
\emph{Keywords:} contact process, SIS infection, configuration model, regular tree, double phase transition
}
\end{abstract}

%\doublespacing
%\tableofcontents  
%\onehalfspacing

%\clearpage

\section{Introduction}

%\subsection{Configuration Model}
The simplest model for sparse social connections between $N$ agents is the random regular graph. 
Identifying each sequential $d$ integers in $[dN]=\{1,\dots,dN\}$ as the half-edges of a distinct vertex in $[N]$, when $d\geq 3$ and $dN$ is even, we can generate a uniform perfect matching on $[dN]$ to define the edges of a random regular \emph{configuration model} $\sG$. Each edge is between the vertices that own each of its half-edges.%---note that loops are permitted as well as multiple copies of the same edge.

This model converges to the regular tree $\mathbb{T}_d$ in the weak local sense \cite{remco}, i.e. for any finite radius $r$ and independently uniform random vertex $i\in[N]$ we can compare distributions of balls% in these graphs
\begin{equation}\label{eq_weak}
B_{\sG}(i,r)\stackrel{({\rm d})}{\rightarrow}B_{\mathbb{T}_d}(o,r).
\end{equation}

$o$ on the right hand side of the above equation is the root of the regular tree, but of course the regular tree is translation invariant. Moreover, we can take $r=\epsilon\log N$ for some small $\epsilon>0$ and still find \eqref{eq_weak}. Given $d\geq 3$ we have that $\sG$ is connected with high probability, however the diameter of $\sG$ is $O_{\p}(\log N)$ so of course the correspondence to the regular tree will have broken down on a graph ball of larger logarithmic radius.

%\subsection{Contact Process}
On this graph $\sG$ we will put a contact process a.k.a. SIS infection, a simple Markovian model for infection without immunity, where each infected vertex recovers independently at Poisson rate $1$. Also, independently, each undirected edge has a Poisson rate $\lambda>0$ of infection times where the infection is spread. Precisely, at an infection time both vertices of the edge become infected if one of them was before. 
We recall the definition of the weak phase transition \cite{pemantle92}.

\begin{theorem}\label{thm_lambda_1}
For any $d \geq 3$, let $\mathbb{T}_d$ be the $d$-regular tree and $o \in V(\mathbb{T}_d)$ be an arbitrary vertex. $(\xi_t)_t$ is a standard contact process from the point source $\{\xi_0=\{o\}\}$, it hits the empty set at time $\tau_\emptyset$. We find:
\begin{itemize}
\item $\tau_\emptyset=\infty$ with positive probability if $\lambda>\lambda_1(d)$;
\item $\tau_\emptyset<\infty$ with probability $1$ if $\lambda<\lambda_1(d)$.
\end{itemize}
\end{theorem}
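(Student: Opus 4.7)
My plan is to set $\lambda_1(d):=\inf\{\lambda\geq 0:\p_\lambda(\tau_\emptyset=\infty)>0\}$ and derive the dichotomy from monotonicity once $\lambda_1(d)\in(0,\infty)$ has been verified. Monotonicity of the survival event in $\lambda$ is immediate from Harris' graphical construction on $\mathbb{T}_d$: superposing an independent Poisson field of infection marks when increasing $\lambda$ can only enlarge the infected set, coupled from the same initial condition $\{o\}$.

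For $\lambda_1(d)<\infty$ I would embed an SIR Galton-Watson tree inside the contact process. Root $\mathbb{T}_d$ at $o$ and declare a tree-child $u$ of $v$ a GW descendant of $v$ exactly when some $v\to u$ infection mark falls inside $v$'s first infected interval, which has Exp$(1)$ duration $R_v$. Since distinct descendant subtrees use disjoint Harris marks, this really is a Galton-Watson process, with mean offspring count $(d-1)\,\e[1-e^{-\lambda R}]=(d-1)\lambda/(1+\lambda)$ for $R\sim\mathrm{Exp}(1)$; this exceeds $1$ whenever $\lambda>1/(d-2)$. Infiniteness of the GW tree produces contact-process infections at arbitrarily large times, so $\{\text{GW survives}\}\subseteq\{\tau_\emptyset=\infty\}$, giving $\lambda_1(d)\leq 1/(d-2)<\infty$ for $d\geq 3$.

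For $\lambda_1(d)>0$ I would estimate the mean directly:
\[
\tfrac{\mathrm{d}}{\mathrm{d}t}\e|\xi_t|=-\e|\xi_t|+\lambda\,\e\bigl|\{xy\in E(\mathbb{T}_d):x\in\xi_t,\,y\notin\xi_t\}\bigr|\leq(\lambda d-1)\e|\xi_t|,
\]
so $\e|\xi_t|\leq e^{(\lambda d-1)t}\to 0$ when $\lambda<1/d$. Since $\emptyset$ is absorbing for the contact process, $\p(\tau_\emptyset>t)=\p(\xi_t\neq\emptyset)\leq\e|\xi_t|\to 0$, giving $\tau_\emptyset<\infty$ almost surely.

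The principal obstacle is the upper-bound direction: one must rigorously verify that survival of the embedded GW tree is a genuine sub-event of $\{\tau_\emptyset=\infty\}$ rather than a mean-wise coincidence. The tree's lack of cycles is what makes this work, since descendant subtrees use disjoint Harris marks, so the offspring sets of distinct vertices are jointly independent, and an infinite GW lineage produces distinct infection times that diverge to infinity. With $\lambda_1(d)\in(0,\infty)$ established, Theorem \ref{thm_lambda_1} follows directly from its definition combined with the graphical monotonicity.
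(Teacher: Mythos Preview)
The paper does not prove this theorem; it merely recalls it with a citation to Pemantle (1992). So there is no ``paper's own proof'' to compare against. As stated, Theorem~\ref{thm_lambda_1} is essentially the \emph{definition} of $\lambda_1(d)$ together with the existence of a threshold, and your approach---define $\lambda_1$ as the infimum and invoke monotonicity of the graphical construction---is exactly the standard one.

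Your additional work showing $\lambda_1(d)\in(0,\infty)$ is correct and is strictly more than what the theorem asserts. The first-moment bound for $\lambda_1>0$ is clean. For the SIR/Galton--Watson lower bound, your concern about whether GW survival is a genuine sub-event of $\{\tau_\emptyset=\infty\}$ is the right thing to flag, but it resolves easily: the contact process from a single site is non-explosive (dominated by a Yule process), so on $\{\tau_\emptyset<\infty\}$ only finitely many vertices are ever touched, forcing the embedded GW tree to be finite. Contrapositively, an infinite GW tree implies $\tau_\emptyset=\infty$. Alternatively, along any infinite ray produced by K\"onig's lemma the successive first-infection times have independent strictly positive increments (disjoint edges, disjoint recovery clocks), hence diverge. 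Either argument closes the gap you identified.
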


This regular tree result has a natural counterpart on the random regular graph. We write the result, to be consistent with the rest of the article, with the standard Landau notation (see Section \ref{sec_notation}). %, a graph which converges to the regular tree in the local weak sense.

\begin{theorem}[ {\cite{valesin16,lalleysu}} ]\label{thm_valesin}%[ {\cite[Theorem 1.2]{valesin16} \cite[Theorem 1.3]{lalleysu}} ]\label{thm_valesin}
On the $d$-regular configuration model, put the contact process from full initial infection $\xi_0=[N]$ and wait until it hits the empty set at time $\tau_\emptyset$. We find:
%\begin{itemize}
%\item if $\lambda>\lambda_1(d)$, there exists $c=c(\lambda)$ such that $\tau_\emptyset>e^{cN}$ with high probability;
%\item  if $\lambda<\lambda_1(d)$, there exists $C=C(\lambda)$ such that $\tau_\emptyset<C\log N$ with high probability.
%\end{itemize}
\[
\tau_\emptyset=
\begin{cases}
e^{\Theta_{\mathbb{P}}(N)}, & \lambda>\lambda_1(d),\\
\Theta_{\mathbb{P}}({\log N}),  & \lambda<\lambda_1(d).
\end{cases}
\]
\end{theorem}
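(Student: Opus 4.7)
I would treat the two regimes separately.

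\textbf{Subcritical regime} ($\lambda<\lambda_1(d)$). I would begin from a standard strengthening of Theorem~\ref{thm_lambda_1}: in the subcritical regime on $\T$, the tree survival probability decays exponentially, $\p(\tau_\emptyset^{\T,\{o\}}>t)\leq e^{-\gamma t}$ for some $\gamma=\gamma(\lambda,d)>0$. For the $O_\p(\log N)$ upper bound on $\sG$, the local convergence \eqref{eq_weak} at radius $\epsilon\log N$ lets me couple the $\sG$-process started from a single vertex with a tree process until the infected cluster exits the local ball, and a union bound over the $N$ initial infections then yields $\p(\tau_\emptyset>C\log N/\gamma)\leq N\cdot e^{-C\log N}\to 0$ for $C>1$. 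For the matching $\Omega_\p(\log N)$ lower bound, with $\epsilon<1/(2\log(d-1))$ and $r=\epsilon\log N$ I would greedily pick a set $S\subset[N]$ of size $N^{\Omega(1)}$ whose $r$-balls in $\sG$ are pairwise disjoint, and restrict the graphical construction to these balls to produce $|S|$ independent truncated-tree contact processes. Each survives beyond time $c\log N$ with probability $\Omega(N^{-c\gamma})$, so choosing $c$ small enough ensures at least one does with high probability, and edge-monotonicity transfers this back to the full process.

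\textbf{Supercritical regime} ($\lambda>\lambda_1(d)$). The upper bound is immediate: from any configuration, with probability at least $e^{-CN}$ for some $C=C(\lambda,d)$, the graphical construction on $[0,1]$ contains at least one recovery mark at every vertex and no infection transmissions on any edge, driving the process to $\emptyset$ by time $1$. Iterating, $\tau_\emptyset$ is stochastically dominated by a unit-rate geometric with success probability $e^{-CN}$, giving $\tau_\emptyset\leq e^{O(N)}$ with high probability.

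The heart of the theorem is the $e^{\Omega(N)}$ lower bound. My plan is a \emph{metastability} argument in two steps: (i) from a small infection, drive the process to a density of infected vertices bounded away from $0$; (ii) from such a metastable configuration, show the extinction time is at least $e^{cN}$. Starting from $\xi_0=[N]$ we are already in the metastable regime, so only (ii) needs establishing. I would implement (ii) by a block renormalisation coupling the contact process to a supercritical oriented percolation: partition $V(\sG)$ into well-separated polynomial-size blocks, declare a block ``good'' when its infection density exceeds a threshold, and exploit the expansion of the random regular graph to show that good blocks generate good neighbours at a rate dominating a supercritical oriented percolation, whose exponential survival lifts back to give $\tau_\emptyset\geq e^{\Omega(N)}$.

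The main obstacle will be step (ii): blocks are never truly independent, so the renormalisation demands sharp expander-mixing (or spectral-gap) estimates on the configuration model and careful conditioning on the pairing defining $\sG$. This is precisely where the finite-graph analysis has to go beyond the infinite-tree picture underlying Theorem~\ref{thm_lambda_1}, exploiting global properties of $\sG$ that the local limit \eqref{eq_weak} cannot see.
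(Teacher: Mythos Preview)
The paper does not prove Theorem~\ref{thm_valesin}; it is quoted from the literature (the citation \cite{valesin16,lalleysu} is attached directly to the theorem heading) and used only as motivating background for the main result Theorem~\ref{thm_main}. There is therefore no ``paper's own proof'' against which to compare your proposal.

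That said, your outline is broadly in the spirit of how those cited works proceed, with one wrinkle in the subcritical upper bound. You phrase it as coupling the graph process to a tree process \emph{until it leaves the local ball}; this is unnecessarily delicate. The cleaner route is global: by self-duality on $\sG$,
\[
\p_\sG\bigl(\tau_\emptyset^{[N]}>t\bigr)\;\leq\;\sum_{v\in[N]}\p_\sG\bigl(v\in\xi_t^{[N]}\bigr)\;=\;\sum_{v\in[N]}\p_\sG\bigl(\xi_t^{\{v\}}\neq\emptyset\bigr),
\]
and then the single-vertex process on $\sG$ is stochastically dominated by the single-vertex process on the universal cover $\T$ via the covering map (exactly the mechanism of Section~\ref{sec_exploring}), with no need to truncate at a ball radius. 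This gives $\p_\sG(\tau_\emptyset>t)\leq N e^{-\gamma t}$ directly. Your supercritical sketch (trivial upper bound; metastability plus oriented-percolation renormalisation for the lower bound) is the right shape, though as you acknowledge the block-independence step on a random regular graph is where all the work lies and is handled in the cited references rather than here.
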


We recall also the definition of the strong phase transition \cite{pemantle92}.

\begin{theorem}\label{thm_pemantle_transition}
For any $d \geq 3$, let $\mathbb{T}_d$ be the $d$-regular tree and $o \in V(\mathbb{T}_d)$ be an arbitrary vertex. $(\xi_t)_t$ is a standard contact process from the point source $\{\xi_0=\{o\}\}$, and $I_k$ denotes the $k$\textsuperscript{th} time the vertex $o$ becomes reinfected from being healthy. Conditionally on the event that $\xi$ survives, almost surely:
%\[
%%\p\left(
%%I_k<\infty
%%\,\Big|\,
%%\xi \text{ survives}
%%\right)
%%\rightarrow
%%
%\mathbbm{1}_{
%I_k<\infty}
%\rightarrow
%\begin{cases}
%1, & \lambda>\lambda_2(d),\\
%0, & \lambda_1(d)<\lambda<\lambda_2(d),
%\end{cases}
%\]
%almost surely as $k\rightarrow\infty$.
%\[
%I_k\stackrel{\p}{\rightarrow}
%\begin{cases}
%\Theta(k), & \lambda>\lambda_2(d),\\
%\infty, & \lambda_1(d)<\lambda<\lambda_2(d),
%\end{cases}
%\]
%as $k\rightarrow\infty$.
\begin{itemize}
\item $I_k<\infty$ for every positive integer $k$ if $\lambda>\lambda_2(d)$;
\item $I_k=\infty$ for all $k$ large enough if  $\lambda_1(d)<\lambda<\lambda_2(d)$.
\end{itemize}
\end{theorem}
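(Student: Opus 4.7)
The plan is to introduce $\lambda_2(d)$ as the threshold separating exponential from sub-exponential decay of the root return probability, and then match each regime to the almost-sure behaviour of $(I_k)_k$ stated in the theorem. From $\xi_0 = \{o\}$, set $p_t(\lambda) := \p(o \in \xi_t)$. The strong Markov property applied at any finite stopping time at which $o$ is infected, combined with monotonicity of the contact process in the initial configuration, yields
\[
p_{t+s}(\lambda) \geq p_t(\lambda)\, p_s(\lambda),
\]
so by Fekete's lemma $\rho(\lambda) := \lim_{t\to\infty} p_t(\lambda)^{1/t} = \sup_{t>0} p_t(\lambda)^{1/t} \in [0,1]$ exists. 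Monotone coupling in $\lambda$ makes $\rho$ non-decreasing, so $\lambda_2(d) := \inf\{\lambda > 0 : \rho(\lambda) = 1\}$ is well defined, and comparison with Theorem~\ref{thm_lambda_1} shows $\lambda_2(d) \geq \lambda_1(d)$.

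In the weak regime $\lambda_1 < \lambda < \lambda_2$ we have $p_t(\lambda) \leq C e^{-ct}$. A short-time lower bound $\p(o \in \xi_{t+1} \mid w \in \xi_t) \geq c' > 0$ for any neighbour $w$ of $o$, a direct consequence of the Markov property and monotonicity, transfers the exponential decay to $\p(w \in \xi_t) \leq p_{t+1}(\lambda)/c'$. Reinfections are exactly the Poisson jumps of the edge clocks between $o$ and its neighbours at times when $o$ is healthy and the neighbour is infected, so the expected total number of reinfections of $o$ is at most
\[
\lambda \sum_{w \sim o} \int_0^\infty \p(w \in \xi_t) \, \de t < \infty.
\]
Borel--Cantelli then forces only finitely many reinfections almost surely, i.e.\ $I_k = \infty$ for all $k$ large enough.

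In the strong regime $\lambda > \lambda_2$ the return probability decays sub-exponentially, and the aim is to upgrade this into the almost-sure conclusion $I_k < \infty$ for all $k$ on the survival event. I would first establish a $0$--$1$ dichotomy: by the strong Markov property applied at the successive reinfection times of $o$ (whenever they exist), the conditional probability of only finitely many reinfections given survival takes the values $0$ or $1$. To exclude the value $1$ I would use the recursive self-similar structure of $\mathbb{T}_d$, writing the probability of eventual extinction of reinfection as a fixed point of an equation arising from the independent subtrees hanging off the $d$ neighbours of $o$; the failure of exponential decay of $p_t$ should force this fixed point to be the trivial one, so that reinfection of $o$ must happen infinitely often on the survival event.

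The main obstacle is this last step: the soft condition $\rho(\lambda) = 1$ does not by itself forbid the scenario in which the infection survives by escaping from $o$ permanently. Closing that gap is exactly where the regular tree's specific branching structure enters, via Pemantle's recursive fixed-point analysis on $\mathbb{T}_d$; in contrast, the weak regime is dispatched by the much softer Borel--Cantelli argument above.
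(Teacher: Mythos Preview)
The paper does not prove this theorem: it is stated with the preamble ``We recall also the definition of the strong phase transition \cite{pemantle92}'' and is quoted as a known result of Pemantle. There is therefore no proof in the paper to compare your proposal against; Theorem~\ref{thm_pemantle_transition} functions purely as background, and the paper's own work begins with the concentration estimates of Section~\ref{sec_concentration}.

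On the substance of your attempt: the weak-survival half is essentially fine. Exponential decay of $\p(o\in\xi_t)$ when $\lambda<\lambda_2$ is indeed the standard characterisation (cf.\ \cite[Theorem~4.65]{liggett1999stochastic}), and your Borel--Cantelli argument bounding the expected number of reinfections is the right way to finish. The strong-survival half, however, is not a proof but a sketch that you yourself flag as incomplete. Two issues remain open in what you wrote. First, you \emph{define} $\lambda_2$ via $\rho(\lambda)=1$, but the theorem implicitly uses the standard definition via local survival ($\limsup_t \p(o\in\xi_t)>0$); the equivalence of these on $\mathbb{T}_d$ is itself a nontrivial fact and part of Pemantle's analysis. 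Second, as you acknowledge, subexponential decay of $p_t$ does not by itself rule out that the infection survives while escaping $o$ forever; closing this requires the tree-specific recursive argument you allude to but do not carry out. So your proposal is a correct outline for one direction and an honest description of the difficulty in the other, not a complete proof.
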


This as yet has no counterpart in finite graphs, which we set out to find in this work. When Theorem \ref{thm_lambda_1} becomes Theorem \ref{thm_valesin} we must translate infinite times into times that are finite and of large order. Similarly we will %always have $e^{\Theta_{\p}(N)}$ reinfections in the finite case, 
see the phase transition on a finite graph corresponding to Theorem \ref{thm_pemantle_transition} at $I_k$ where $k=k(N)$ grows with the size of the graph.

On a wide range of trees, there has been considerable further attention to the first phase transition. For Galton--Watson trees (with i.i.d. offspring) we see in \cite{durrett20} that $\lambda_1=0$ for any tree where every exponential moment of the offspring distribution is infinite. 
%subexponential offspring distribution $p_k$:
%\[
%\limsup_{k\rightarrow\infty}\frac{1}{k}\log p_k \rightarrow 0.
%\]
In \cite{bhamidi2019survival} the picture is completed with the result that $\lambda_1>0$ whenever the offspring distribution is \emph{subexponential} in the sense that there is some finite exponential moment. This was explored recently in \cite{immunisation} by looking at bounded versions (with diverging bound) of subexponential offspring distributions still achieving $\lambda_1=0$, with open questions for the lighter tail parameters.

However, there has been no examination of $\lambda_2$ on finite graphs. Note that works \cite{zbMATH01713831,zbMATH06340420} do refer to $\lambda_2(\T)$, but their interest is in a graph where it happens that $\lambda_1$ for the weak local limit (the \emph{canopy tree}, see also \cite[Equation 1.2]{valesin16}) is equal to $\lambda_2(\T)$ and so it is still really a weak survival phase transition that is described. Indeed, this is why the transition of those works at $\lambda_2(\T)$ is the appearance of exponential survival time, as in Theorem \ref{thm_valesin}.

\section{Main results}\label{sec_main}

We find the following feature of the second critical value in finite graphs: that reinfections are linear in the strong survival phase, but in the weak survival phase follow a healthy period asymptotic to $\tfrac{1}{c_\lambda}\log N$. Here the constant $c_\lambda$ is the Malthusian parameter for the contact process \[\frac{1}{t} \log \e_{\, o} \left( |\xi_t| \right) \rightarrow c_\lambda,\] for another definition see Proposition \ref{prop_cp_mean}.

\begin{theorem}\label{thm_main}
For any $d \geq 3$, let $\sG$ be a uniform $d$-regular multigraph on $[N]$. $(\xi_t)_t$ is a standard contact process from the point source $\{\xi_0=\{1\}\}$, and $I_k$ denotes the $k$\textsuperscript{th} time the vertex $\{1\}$ becomes reinfected from being healthy.

We find, conditionally on the event that the infection survives for $\omega(1)$ time,% tending to infinity with N
%\[
%I_{\left\lfloor\log^{\epsilon} N\right\rfloor}=
%\begin{cases}
%\Theta_{\mathbb{P}}({\log^{\epsilon} N}), & \lambda>\lambda_2(d),\\
%\frac{1}{c_\lambda}\log N +\Theta_{\mathbb{P}}({\log^{\epsilon} N}),  & \lambda_1(d)<\lambda<\lambda_2(d),
%\end{cases}
%\]
\begin{numcases}{I_{\left\lfloor\log^{\epsilon} N\right\rfloor}=}
   \Theta_{\mathbb{P}}({\log^{\epsilon} N}), & $\lambda>\lambda_2(d)$, \label{case_strong}
   \\
   \frac{1}{c_\lambda}\log N +\Theta_{\mathbb{P}}({\log^{\epsilon} N}), & $\lambda_1(d)<\lambda<\lambda_2(d)$, \label{case_weak}
\end{numcases}
%for any $k=k(N)\rightarrow\infty$ with $k=o(\log N)$ and where $C>\frac{1+\log_2 \nicefrac{d}{d-2}}{2+\log_2 \nicefrac{d}{d-2}}$.
for any $\epsilon\in (0,1)$.
\end{theorem}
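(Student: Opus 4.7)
The plan is to exploit the tree local limit \eqref{eq_weak} (valid up to logarithmic radius) together with the Malthusian growth $\e|\xi_t|\sim e^{c_\lambda t}$. The heuristic is as follows: in the strong phase, vertex $1$ is reinfected by an excursion that survives in its own tree neighbourhood, giving a linear reinfection rate; in the weak phase, an escaping excursion cannot return locally (by Theorem~\ref{thm_pemantle_transition}), so reinfections must arrive through the rest of the graph, which requires the infected set to grow to macroscopic size --- a process that takes time $\tfrac{1}{c_\lambda}\log N$.

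For case \eqref{case_strong}, I would first strengthen Theorem~\ref{thm_pemantle_transition}: using the i.i.d.\ structure of excursions away from $o$ on $\mathbb{T}_d$, a renewal argument shows that conditional on survival, $I_k/k \to \gamma_\lambda \in (0,\infty)$ almost surely. For $k=\lfloor\log^\epsilon N\rfloor$, the tree process occupies a ball of radius $O(\log^\epsilon N) \ll \epsilon\log N$ up to time $I_k$. By \eqref{eq_weak} this ball in $\sG$ is isomorphic to a tree ball with probability $1-o(1)$, and coupling the two processes identically on this ball transfers $I_k = \Theta_\mathbb{P}(\log^\epsilon N)$ to $\sG$.

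For the upper bound in case \eqref{case_weak}, conditional on $\omega(1)$ survival the process actually survives for $e^{\Theta(N)}$ time with probability bounded away from zero (combining Theorems~\ref{thm_lambda_1} and~\ref{thm_valesin}). Branching/exploration comparisons with the tree process give $|\xi_t| = \Theta_\mathbb{P}(e^{c_\lambda t})$ (Seneta--Heyde style) as long as $|\xi_t|$ is of sub-macroscopic size, and the arguments in \cite{valesin16,lalleysu} yield that once the infection is macroscopic it reaches a quasi-equilibrium density $\rho>0$. Thus by time $\tfrac{1}{c_\lambda}\log N + O(\log^\epsilon N)$ a positive fraction of vertices is infected, after which vertex~$1$ is reinfected at rate $\Theta(1)$ and the further $\lfloor \log^\epsilon N\rfloor$ reinfections accumulate in time $\Theta_\mathbb{P}(\log^\epsilon N)$.

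The main obstacle is the matching lower bound in \eqref{case_weak}. I would split reinfections into \emph{local} (via an infection path staying in $B_\sG(1,\epsilon\log N)$) and \emph{global}. For local reinfections, the tree coupling is valid on this ball until the process first leaves it, and Theorem~\ref{thm_pemantle_transition} ensures the total number of root reinfections on $\mathbb{T}_d$ is almost surely finite --- hence $O_\mathbb{P}(1) = o(\log^\epsilon N)$ local reinfections occur. For global reinfections, the configuration model gives a per half-edge probability $1/N$ of pointing to vertex $1$, so the rate of global reinfection at time $t$ is at most $\lambda d |\xi_t|/N$. Integrating against a high-probability envelope $|\xi_t| \le e^{c_\lambda t + O(\log^{\epsilon/2} N)}$ shows the expected number of global reinfections by time $T$ is of order $e^{c_\lambda T}/N$ up to subleading factors, which is $o(\log^\epsilon N)$ unless $T \ge \tfrac{1}{c_\lambda}\log N - \omega(\log^\epsilon N)$. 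The chief difficulty is to establish this envelope in probability rather than only in expectation, since infections on $\sG$ do not form a true branching process; I would proceed by dominating infections via the breadth-first exploration of $\sG$ (truncated while the explored region has size $\le N^{o(1)}$) and applying an Azuma-type bound to the supermartingale $e^{-c_\lambda t}|\xi_t|$ on short sub-intervals to obtain the required sub-exponential tail.
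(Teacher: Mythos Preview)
Your treatment of case~\eqref{case_strong} is essentially the paper's: both transfer the tree result to $\sG$ via a coupling valid for $o(\log N)$ time (the paper through its explicit labelling/collision estimate, Lemma~\ref{lem_blue_tree}, rather than \eqref{eq_weak}, but either suffices). One caveat: the excursions away from $o$ are \emph{not} i.i.d., since the rest of the infection persists when $o$ recovers, so your renewal statement needs a different justification---the positive limiting density of $\{o\in\xi_t\}$ in the strong phase is what gives $I_k=\Theta(k)$.

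For the lower bound in \eqref{case_weak} you have the right local/global split but have made the global part much harder than it is. A pointwise envelope on $|\xi_t|$ is unnecessary: the expected number of times a tree vertex with label $1$ (other than $o$) is ever infected by time $t$ is at most $\tfrac{1}{N}\,\e\big|\!\bigcup_{s\le t}\xi_s\big|=O(e^{c_\lambda t}/N)$ by Corollary~\ref{cor_union}, which is $o(1)$ at $t=\tfrac{1}{c_\lambda}\log N-\Theta(k)$. Markov's inequality then gives zero global reinfections with high probability, and the remaining $\Omega(k)$ reinfections are forced to take $\Omega(k)$ further time by rate-$1$ recovery at the root. The paper does exactly this in a few lines; the Azuma/supermartingale apparatus you propose, and the ``chief difficulty'' you flag, are not needed.

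The real gap is your upper bound in \eqref{case_weak}. You assume $|\xi_t|=\Theta_\p(e^{c_\lambda t})$ on $\sG$ all the way to macroscopic scale, i.e.\ $\log|\xi_t|=c_\lambda t+O(1)$. No such Seneta--Heyde statement is available: \cite{lalleysu} gives only $\log|\xi_t|=c_\lambda t(1+o_\p(1))$, losing $o(\log N)\gg\log^\epsilon N$ in the timing, and the paper's own new left-tail bound (Proposition~\ref{cor_contact_growth_lower_bound}, obtained from the moment estimate Proposition~\ref{prop_cp_moments}) still only achieves $\log|\xi_t|\ge c_\lambda t-t^\delta$. Worse, once $|\xi_t|\gg\sqrt{N}$ the tree coupling breaks down and collisions slow growth, so the passage from $\sqrt{N}$ to $\Theta(N)$ in time $O(\log^\epsilon N)$ is not covered by any tree estimate. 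The paper avoids this entirely by a self-duality argument: it grows a forward infection and $k$ time-reversed infections each for time $\approx\tfrac{1}{2c_\lambda}\log N$, so only to size $\sim\sqrt{N}$ where the labelling is still injective (Lemma~\ref{lem_blue_tree}), and then matches pioneer labels between the two sides. This two-sided path-finding is the key idea you are missing, and it is precisely what allows the $O(\log^\epsilon N)$ precision without a full Seneta--Heyde theorem.
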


Note that we do not enforce simplicity in the configuration model of our main theorem: we allow both loops and multiply featured edges. This is because as the multigraph version is simple with asymptotically positive probability \cite{janson2009probability} and so the above form is the stronger statement. Also we condition on survival for some time tending to infinity with $N$: of course if we do not have this then $I_{\left\lfloor\log^{\epsilon} N\right\rfloor}=\infty$, but in any event with survival for $\omega(1)$ time we have $I_{\left\lfloor e^{\Omega_{\p}(N)}\right\rfloor}<\infty$.

The main theorem relies on understanding the concentration of the contact process on $\T$. Easily, from Markov's inequality, we can control the right tail using that the expectation is $\Theta(e^{c\lambda})$ \cite{morrowschinazizhang}. However by a new upper bound on the moments of the contact process, in Proposition~\ref{prop_cp_moments}
\begin{equation}\label{eq_moment}
\mathbb{E}_{\, o} \Big( | \xi_t |^n \Big)  = O \left(
t^{n\log_2 
\tfrac{d}{d-2}
}
e^{ n c_\lambda t }
\right),
\end{equation}
we can control the more difficult left tail of a growing infection from $\{\xi_0=o\}$.

\begin{proposition}\label{cor_contact_growth_lower_bound}
The contact process $(\xi_t)_t$ on the regular tree $\T$ with root $o \in V(\mathbb{T}_d)$ has
\[
\p_o\left(
\log | \xi_t | \leq c_\lambda t - 
t^{\delta}
\, \Big| \,
\xi_{t} \neq\emptyset
\right)
\leq e^{-t^{\nicefrac{\delta}{4}}}
\]
for any $\delta\in (0,1)$ and $t$ sufficiently large.
\end{proposition}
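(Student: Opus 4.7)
The plan is to use the moment bound~\eqref{eq_moment} to seed a Paley--Zygmund estimate, then amplify this to stretched-exponential concentration via a block-restart argument exploiting the tree's branching geometry.

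\textbf{Seed (Paley--Zygmund).} From the reverse Markov inequality
\[
\p\bigl(Y\geq\theta\,\e Y\bigr)\geq\left(\frac{(1-\theta)\,\e Y}{(\e Y^n)^{1/n}}\right)^{\!n/(n-1)},\qquad Y\geq 0,\ \theta\in(0,1),\ n\geq 2,
\]
applied to $Y=|\xi_r|$ with $\theta=\tfrac12$, using $\e_o|\xi_r|=\Theta(e^{c_\lambda r})$ together with \eqref{eq_moment} and letting $n$ be large, one obtains a constant $\kappa=\kappa(d)>0$ for which $\p_o(|\xi_r|\geq e^{c_\lambda r}/3)\geq r^{-\kappa}$.

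\textbf{Bootstrap.} Partition $[0,t]$ into $K$ blocks of length $r$ and write $m_j:=|\xi_{jr}|$. Conditional on $\xi_{jr}$, the Markov property and Harris's graphical construction decompose $\xi_{(j+1)r}$ as a union of single-source sub-processes indexed by the vertices of $\xi_{jr}$; after restricting each to its own radius-$r$ tree-neighbourhood, which uses a disjoint region of the graphical construction, any pairwise-distance-$>2r$ subfamily yields mutually independent copies of the single-source process. Combining the seed above with a Chernoff bound over this independent family gives
\[
m_{j+1}\geq c\,m_j\cdot r^{-\kappa}\,e^{c_\lambda r}\qquad\text{with probability}\geq 1-\exp\bigl(-c'\,m_j\,r^{-\kappa}\bigr).
\]
Iterating over $K\asymp t^{\delta}$ blocks of length $r\asymp t^{1-\delta}$ and passing to logarithms yields
\[
\log|\xi_t|\geq c_\lambda t - O(K\log r) = c_\lambda t - O(t^\delta),
\]
with total Chernoff failure probability at most $e^{-t^{\delta/4}}$, provided $m_j$ exceeds a polynomial-in-$t$ threshold throughout. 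That threshold is ensured by an initial ramp-up phase consisting of repeated Paley--Zygmund attempts, whose modest cost is absorbed into the error term.

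\textbf{Main obstacle.} The hardest point is producing the independent subfamily in the bootstrap without a crippling loss. A greedy choice of a distance-$>2r$ subset costs a factor $|B(2r)|\asymp d^{2r}$ in $m_j$, and iterated over $K$ blocks this accumulates a fatal deficit $\sim Kr\log d\asymp t\log d$ in $\log|\xi_t|$. To circumvent this one must either (i) abandon strict disjointness and control the pairwise overlap of overlapping sub-processes via a Bonferroni / second-moment computation, where the precise polynomial correction in~\eqref{eq_moment} at $n=2$ is essential, or (ii) exploit that in the weak survival phase $\xi_{jr}$ is concentrated on a sphere of radius $\sim v_\lambda\,j r$ consisting of generically well-separated vertices. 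Either approach replaces the $d^{2r}$ loss by a polynomial correction in $r$, closing the iteration under the parameter choices above.
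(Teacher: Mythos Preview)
Your high-level strategy---Paley--Zygmund seed plus amplification via restarts---is exactly the paper's, but the obstacle you flag is a genuine gap and neither proposed fix closes it. Option~(ii) is inadmissible: the proposition is stated and proved for all $\lambda>\lambda_1$, and for $\lambda>\lambda_2$ the process is not concentrated on a thin sphere, so well-separatedness of $\xi_{jr}$ cannot be invoked. Option~(i) is only a gesture; a Bonferroni or second-moment control of overlapping sub-processes would require a covariance estimate for contact processes started from nearby vertices, and you give no indication of how to extract a polynomial-in-$r$ (rather than exponential-in-$r$) correction from that.

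The paper resolves the independence issue with no loss by exploiting the tree structure directly. In the proof of Proposition~\ref{prop_cp_mean} it is observed that any finite $S\subset\T$ has at least $(1-2/d)|S|$ vertices with an incident branch disjoint from~$S$; these branches are pairwise disjoint infinite subtrees, so restricting the future evolution from each such vertex to its free branch yields genuinely independent copies of the \emph{severed} contact process $\eta$ (root of degree~$1$). No $2r$-spacing is needed and no $d^{2r}$ penalty arises. With this device the paper also organises the argument differently from your $K$-block iteration: a ramp-up phase on timescale $U$ repeatedly restarts from a single surviving vertex until the Paley--Zygmund event $\{|\eta_U|\ge\tfrac12\e|\eta_U|\}$ occurs (this happens within time $U^{1+(1+\epsilon)\log_2\frac{d}{d-2}}$ with stretched-exponential failure probability), after which one has $I=\Theta(e^{c_\lambda U})$ independent severed processes and a \emph{single} Chebyshev step over the remaining time $T\approx t$ controls $\sum_{a\le I}|\eta_T^{(a)}|$. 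The final exponent $\delta/4$ comes from setting $U^{2+\log_2\frac{d}{d-2}}\asymp t^\delta$ and using $2+\log_2\tfrac{d}{d-2}<4$.
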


To upper bound the left tail, we must stochastically lower bound the infection. Our proof shows the same lower bound on the contact process in a subgraph of $\T$ (see Figure \ref{fig_severed}) and in fact just on the boundary of the infection on that subgraph. This is an improvement on the following result.

\begin{proposition}[ {\cite{lalleysu}} ]%[ { \cite[Proposition 2.4]{lalleysu} } ]
The contact process $(\xi_t)_t$ on the regular tree $\T$ with root $o \in V(\mathbb{T}_d)$ has
\[
\p_o\left(
\log | \xi_t | \leq c_\lambda t - 
\delta t
\, \Big| \,
\xi_t \neq\emptyset
\right)
\rightarrow 0
\]
for any $\delta > 0$ as $t\rightarrow\infty$.
\end{proposition}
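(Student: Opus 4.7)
The plan is to bootstrap the moment upper bound \eqref{eq_moment} into a left-tail concentration estimate via a second-moment method combined with independent-trials amplification on a severed subgraph $\tilde{\mathbb{T}} \subset \T$ of the kind indicated in Figure \ref{fig_severed}. On $\tilde{\mathbb{T}}$ each edge allows infection to travel only outward from the root, so by monotonicity the process $\tilde{\xi}_t$ there satisfies $|\xi_t| \geq |\tilde{\xi}_t|$, and the key structural gain is that the boundary vertices (the deepest infected in each lineage) spawn sub-processes in disjoint subtrees that therefore evolve independently.

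The first ingredient is a short-time Paley-Zygmund estimate: for a step length $r$, the bound \eqref{eq_moment} gives $\e|\tilde{\xi}_r|^2 = O(r^{2 \log_2 \tfrac{d}{d-2}})(\e|\tilde{\xi}_r|)^2$, and hence the boundary of $\tilde{\xi}_r$ exceeds half its mean with probability at least
\[
q \;:=\; c\, r^{-2 \log_2 \tfrac{d}{d-2}},
\]
a merely polynomial lower bound. To boost this into super-polynomial concentration I would first run a short warm-up of length $s = O(\log t)$, which on the survival event brings the boundary up to a polynomial threshold $K_1 \geq t^{\alpha}$ except with super-polynomially small probability, and then partition $[s,t]$ into $m = (t-s)/r$ intervals of some length $r$ polynomial in $t$.

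At each iteration, conditional on a boundary of size $K_i$, the descendants in the $K_i$ disjoint fresh subtrees are independent copies of $\tilde{\xi}_r$; a Chernoff bound then shows that a $q/2$ fraction of them each produce at least $\tfrac{1}{2}\,\e|\tilde{\xi}_r|$ new boundary vertices, failing with probability $\exp(-\Omega(K_i q))$. This yields the recursion $K_{i+1} \geq (q/4)\, \e|\tilde{\xi}_r|\, K_i$. Concatenating $m$ steps,
\[
|\xi_t| \;\geq\; K_m \;\geq\; K_1 \cdot (q/4)^m \cdot (\e|\tilde{\xi}_r|)^m,
\]
and since $\e|\tilde{\xi}_r| = \Theta(e^{c_\lambda r})$ with $mr = t - s$, the product $(\e|\tilde{\xi}_r|)^m$ contributes $e^{c_\lambda t + o(t)}$ while $(q/4)^m = \exp(-O(m \log r))$. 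Choosing $r$ polynomial in $t$ with $r \gg t^{1-\delta}$ forces $m \log r \ll t^{\delta}$, keeping the multiplicative loss inside the allowed exponent slack $t^\delta$. The union bound over the $m$ Chernoff failures contributes at most $m\exp(-\Omega(K_1 q)) \leq e^{-t^{\delta/4}}$ after tuning $\alpha$ large enough that $K_1 q \gg t^{\delta/4}$.

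The main obstacle is this parameter balancing: each iteration is a single-shot Paley-Zygmund whose success probability $q$ is only polynomial, so $m$ cannot be too large lest $(q/4)^m$ overwhelm the allowed slack, yet the Chernoff amplification still requires $K_1 q \gg t^{\delta/4}$ at the very first step, which in turn forces the warm-up exponent $\alpha$ to scale with the polynomial exponent in $q$. A secondary technical point is to secure the warm-up bound on $|\tilde{\xi}_s|$ with super-polynomially small failure probability rather than the mere convergence in probability of the earlier qualitative lower bound, which can be arranged by iterating on a shorter scale using the same moment bound \eqref{eq_moment} or by invoking a quantitative estimate for the survival probability on $\T$.
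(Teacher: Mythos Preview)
Your proposal has a genuine gap in the choice of minorant. The process $\tilde\xi$ you describe --- contact dynamics with infection permitted only outward along each edge --- is not the severed contact process of Figure~\ref{fig_severed} but a directed variant in which back-infection from children is suppressed. There is no reason its Malthusian parameter should equal $c_\lambda$; reinfection from descendants contributes nontrivially to the growth of the genuine contact process on $\T$, so the one-way rate is strictly smaller. Your assertion $\e|\tilde\xi_r|=\Theta(e^{c_\lambda r})$ is therefore unjustified, and the recursion delivers only $\log|\xi_t|\geq c' t-o(t)$ for some $c'<c_\lambda$, which fails the statement once $\delta<c_\lambda-c'$. A secondary issue is that even in your directed model the ``deepest infected'' vertices need not have fresh subtrees ahead of them (a child may already have been infected and recovered), so the independence you invoke also requires the pioneer restriction.

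The paper works instead with the bidirectional severed process $\eta$ (ordinary contact dynamics on the tree of Figure~\ref{fig_severed}), for which $\e|\eta_r|=\Theta(e^{c_\lambda r})$ is cited from \cite{lalleysu}; independence of continuations is then obtained from \emph{pioneers} (Definition~\ref{def_pioneer}) rather than boundary vertices, and showing that pioneers are plentiful is a separate step (Corollary~\ref{cor_pion_growth_lower_bound}). Structurally the paper's argument for the sharper Proposition~\ref{prop_sev_growth_lower_bound} is also different from your many-step iteration: after a warm-up of length $U^{1+O(1)}$ that reaches $I=\Theta(e^{c_\lambda U})$ pioneers (via repeated single-vertex trials, a higher-moment Paley--Zygmund giving success probability $U^{-O(1)}$, and a Galton--Watson restart to stay inside the survival conditioning), it takes a \emph{single} step of length $T\sim t$ and applies Chebyshev to the i.i.d.\ sum $\sum_{a=1}^I|\eta^{(a)}_T|$. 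This avoids your multiplicative loss $(q/4)^m$ altogether. The warm-up difficulty you flag --- reaching $K_1\geq t^\alpha$ with super-polynomially small failure on the survival event --- is precisely what that repeated-trial-plus-restart mechanism resolves.
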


This allows us to repeat some estimates with their exponential error factors (of arbitrarily slow base) improved to stretched exponential error factors (of arbitrarily slow stretching exponent), and so get a far better control on the number of contact process paths. Parts of this article therefore are strengthenings of \cite{lalleysu}, in particular the construction of Section \ref{sec_path} is quite similar. However, the algorithmic proof of that section is much shorter than their second moment method approach, and we hope easier to understand. Additionally, the labelling of Section \ref{sec_exploring} is based on their labelling but is somewhat simplified for our purposes, removing their distinction of ``priority'' infected vertices within an infection colouring and removing that a vertex loses its label on recovery. %when it becomes healthy.

\begin{remark}[Locality]
There has been considerable interest since \cite{schramm96} in the conjecture that the critical parameter for bond percolation agrees with the critical parameter of the local limit. Recently in \cite{easo2023critical} this was proved for vertex-transitive graphs.

The contact process itself is essentially a percolation model (where paths must go forwards in time), and the relationship between Theorems \ref{thm_lambda_1} and \ref{thm_valesin} is the same sort of locality. 
On other graphs, \cite{bhamidi2019survival} find on a configuration model with completely general degrees that the critical value is positive for the local limit if and only if it is positive for the finite graph. To be precise, the transition is slightly weaker than that of Theorem \ref{thm_valesin}, but is between extinction at $e^{\Theta_{\mathbb{P}}(N)}$ and  extinction bounded by $N^{1+o_{\mathbb{P}}(1)}$. They naturally conjecture that the nonzero critical values also agree between those other configuration models and their Galton--Watson local limits, but this remains open.

Similarly, in \cite{zbMATH04086722} it was seen that the critical value for the contact process on a long path (to survive exponentially long) agrees with the critical value of the path's local limit: $\mathbb{Z}$.

This work does not add to the literature on the locality of the weak critical value; it could be said that we find the correspondence not of the existence of a path to infinity but instead of infinitely many closed paths through the root. This points to a different type of locality result which could hold in more generality for graphs whose local limit has a double phase transition for the contact process, for instance the same configuration models considered by \cite{bhamidi2019survival}.
\end{remark}

Looking ahead from the results of Section \ref{sec_concentration}, the natural conjecture is that this moment of \eqref{eq_moment} really has no polynomial factor, or even that there is some random variable $W$ such that
\[
e^{ - c_\lambda t }
| \xi_t |
\rightarrow W
\qquad \text{a.s.}
\]
with $W>0$ on the event that $\xi_t$ survives and $\e(W^n)<\infty$ for every positive integer $n$. A result like this might allow Theorem \ref{thm_main} to include reinfections of smaller ordinals $(\omega(1),\log^{o(1)}N$), but seems inaccessible with our timestepping approaches to lower bounding the contact process.

%\begin{remark}
%Note for $k \sim \log^{\tfrac{1}{2}+\epsilon}N$ and $d$ sufficiently large, this result can be written
%\[
%I_k=\begin{cases}
%\Theta_{\mathbb{P}}(k) & \lambda>\lambda_2(d)\\
%\frac{1}{c_\lambda}\log N +\Theta_{\mathbb{P}}(k)  & \lambda_1(d)<\lambda<\lambda_2(d)
%\end{cases}
%\]
%but a $\sqrt{\log N}$ error at least is necessary with the timestepping approach, particularly in the calculation \eqref{eq_timestepping}.
%\end{remark}
%
%\begin{remark}
%This theorem could also be stated for any $d\geq 3$ as e.g.
%\[
%I_{\left\lfloor\log^{\nicefrac{3}{4}} N\right\rfloor}=\begin{cases}
%\Theta_{\mathbb{P}}({\log^{\nicefrac{3}{4}} N}) & \lambda>\lambda_2(d)\\
%\frac{1}{c_\lambda}\log N +\Theta_{\mathbb{P}}({\log^{\nicefrac{3}{4}} N})  & \lambda_1(d)<\lambda<\lambda_2(d)
%\end{cases}
%\]
%still of course conditionally on the event that the infection survives.
%\end{remark}

%\clearpage

\subsection{Notation}\label{sec_notation}

%As above we use the following notation.

\begin{description}[labelsep=2em, itemsep=-0.3em]
\item[$\T$] The $d$-regular tree.
\item[${[N]}$] The set $\{ 1, \dots, N \}$.
\item[$\ell: \T \rightarrow {[N]}$] The labelling function.
\item[$\sG$] The $d$-regular configuration model on $[N]$.
\item[$\xi_t$]The contact process with $\xi_0=\{o\}$.
%\end{description}
%At this point we will also summarise the most important notation to be later introduced.
%\begin{description}[labelsep=1.5em]
\item[$\eta_t$] The \emph{severed} contact process with $\eta_0=\{o\}$, i.e. the contact process on the tree rooted at $o$ where $\de(o)=1$ and all other degrees are $d$.
\item[$\partial\zeta_t$] The boundary subset of some process $\zeta_t$.
\item[$\pion\zeta_t$] The \emph{pioneer} subset of $\partial\zeta_t$: see Definition \ref{def_pioneer}.
\item[$\tilde{\xi}_t, \tilde{\eta}_t$] Coupled infections on $[N]$ mapped by $\ell^{-1}$ into a ``truly infected'' subset of $\xi_t$ or $\eta_t$.
\item[$f(n)=O(g(n))$ or $g(n)=\Omega(f(n))$] $\nicefrac{f(n)}{g(n)}$ is asymptotically bounded by a constant.
\item[$f(n)=o(g(n))$ or $g(n)=\omega(f(n))$] $\nicefrac{f(n)}{g(n)}$ is asymptotically bounded by any positive constant.
\item[$f(n)=\Theta(g(n))$] Both $\nicefrac{f(n)}{g(n)}$ and $\nicefrac{g(n)}{f(n)}$ are asymptotically bounded by a constant.
\item[$f(n)=\Theta_{\p}(g(n))$ etc.] The same statements holding with probability tending to $1$ as $n \rightarrow\infty$.
%\item[$f(n)=\Theta^{\log N}_{\p}(g(n))$] The same statement but the constants can grow polylogarithmically.
\end{description}

\section{Concentration of contact process on regular tree}\label{sec_concentration}

First we prove an improvement to existing upper bounds on the first moment.

\begin{proposition}\label{prop_cp_mean}
The contact process $(\xi_t)_t$ has some constant $c_\lambda\geq 0$ such that
\[
e^{c_\lambda t} \leq
\mathbb{E} \Big( | \xi_t | \Big)  \leq
\frac{d}{d-2}
e^{c_\lambda t}.
\]
\end{proposition}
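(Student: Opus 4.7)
I would prove the two inequalities separately, with the lower bound via a standard subadditivity argument and the upper bound via a tailored supermartingale.

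For the lower bound, set $m(t):=\e_o(|\xi_t|)$. By the Markov property at time $t$, the vertex-transitivity of $\T$, and a union bound over which vertex of $\xi_t$ is responsible for each later infection,
\[
m(t+s) \;=\; \e_o\bigg( \sum_v \p\big(v \in \xi_{t+s} \,\big|\, \xi_t\big) \bigg) \;\leq\; \e_o\bigg( \sum_{u \in \xi_t} \e_u(|\xi_s|) \bigg) \;=\; m(t)\, m(s).
\]
Hence $\log m$ is subadditive in $t$, so by Fekete's lemma the limit defining $c_\lambda$ exists, equals $\inf_{t>0} t^{-1}\log m(t)$, and satisfies $m(t) \geq e^{c_\lambda t}$ for all $t>0$. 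Non-negativity of $c_\lambda$ should then follow from a short additional comparison (for example, restricting to a half-tree where the infection survives with positive probability and has non-shrinking expected size).

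For the upper bound, the plan is to construct a supermartingale from a non-negative radial weight $f(v)=g(d(v,o))$ on $\T$ with $f\geq 1$ everywhere and satisfying
\[
(Af)(u) \;\leq\; \tfrac{1+c_\lambda}{\lambda}\,f(u) \qquad \forall\,u\in V(\T),
\]
where $A$ is the tree adjacency. A direct generator computation gives
\[
\mathcal{L}\bigg(\sum_{v\in S} f(v)\bigg) \;\leq\; -\sum_{v\in S}f(v) + \lambda\sum_{u\in S}(Af)(u) \;\leq\; c_\lambda \sum_{v\in S}f(v),
\]
so that $M_t := e^{-c_\lambda t}\sum_{v\in\xi_t}f(v)$ is a supermartingale and $m(t)\leq f(o)\,e^{c_\lambda t}$. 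The radial constraint reduces to a three-term recursion $(d-1)g(k+1) + g(k-1) \leq \tfrac{1+c_\lambda}{\lambda}g(k)$ for $k\geq 1$ together with the root condition $d\,g(1)\leq \tfrac{1+c_\lambda}{\lambda}g(0)$, which encodes the $d$-valence of $o$. Minimizing $g(0)$ over admissible $g\geq 1$ while matching the two roots of the quadratic $(d-1)r^2 - \tfrac{1+c_\lambda}{\lambda}r + 1 = 0$ yields $g(0) = \tfrac{d}{d-2}$.

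The main obstacle is producing such a $g$ that is simultaneously bounded below by $1$ uniformly and a subeigenfunction at the sharp eigenvalue: the constant ansatz $g\equiv 1$ only accommodates $\mu = d$, yielding only the crude branching-random-walk exponent $c_\lambda\leq d\lambda-1$, while each single decaying mode vanishes at infinity. One must therefore combine the two decaying modes with coefficients matched against the root boundary condition, and verify global positivity. An alternative route that exploits the paper's notation uses the severed process $\eta_t$: by the $d$-fold symmetry of $\T$, decompose $m(t) = \p(o\in\xi_t) + d\,\mu_1(t)$ where $\mu_1(t) = \e_o(|\xi_t \cap T_1|)$ for one of the subtrees rooted at a neighbor of $o$, compare $\mu_1(t)$ with $n(t):=\e_o(|\eta_t|)$, and run a Fekete-type argument on $n$ with sharp tracking of reinfections through the root; here the factor $\tfrac{d}{d-2}$ emerges transparently from a geometric series over excursions to and from $o$, reflecting the return probability $\tfrac{1}{d-1}$ of a simple random walk on $\T$.
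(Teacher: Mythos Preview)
Your lower bound via subadditivity and Fekete is exactly the paper's argument. For the upper bound, however, you take a quite different route and leave the key step unfinished.

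The paper's upper bound is not spectral at all: it comes from a \emph{supermultiplicative} inequality matching the submultiplicative one. The combinatorial observation is that any finite $S\subset V(\T)$ has at least $|S|(d-2)$ adjacent free branches (add vertices in breadth-first order from some $v\in S$: each new vertex kills at most one free branch and contributes $d-1$ new ones). Freezing at time $t$ and giving each vertex of $\xi_t$ its own colour, a vertex with $b\geq d-2$ free incident branches expects at least $\tfrac{b}{d}\,\e(|\xi_s|)$ descendants of its colour at time $t+s$, since within each free branch its infection evolves without interference. Summing gives $m(t+s)\geq \tfrac{d-2}{d}\,m(t)\,m(s)$, so $\log\big(\tfrac{d-2}{d}m(t)\big)$ is superadditive with the \emph{same} limit $c_\lambda$, and Fekete immediately yields $m(t)\leq \tfrac{d}{d-2}e^{c_\lambda t}$. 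The constant is thus just the reciprocal of the tree's isoperimetric ratio, with no eigenfunction needed.

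Your supermartingale route has two genuine gaps. First, you never construct the radial weight $g$: you assert that minimising $g(0)$ over $g\geq 1$ gives $\tfrac{d}{d-2}$, then immediately call this ``the main obstacle'' and leave it unresolved. A single decaying root of the quadratic sends $g\to 0$; the constant mode forces $\mu=d$; combining modes against the root boundary condition while keeping $g\geq 1$ globally is exactly the delicate part, and you have not done it. Second, and more seriously, your argument presupposes that the minimal $c$ admitting such a radial subeigenfunction coincides with the Fekete constant $c_\lambda=\lim t^{-1}\log m(t)$. Your generator inequality only gives $m(t)\leq g(0)\,e^{ct}$ for whatever $c$ your $g$ achieves; matching $c=c_\lambda$ requires either an explicit formula for $c_\lambda$ in terms of $\lambda,d$ (which you do not have) or a separate sharpness argument. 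The paper sidesteps both issues entirely by working directly with $m(t)$.
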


A weaker version of this result is stated as \cite[Theorem 1]{morrowschinazizhang} with upper bound $C(d)e^{c_\lambda t}$, where the arbitrary constant can be extracted from \cite[(3.5)]{madrasschinazi} as $C(d):=\nicefrac{d^2}{d-2}$. Here we count healthy incident branches directly, and so make a simple improvement to their explicit bound on the constant factor as this factor will be significant for our inferred control of the variance.

We could also find a boundary from the Cheeger constant $h(\T)=1-\nicefrac{2}{d}$ if we move to a minimal connected set that contains the infection, but the following proof is a little more intuitive.

\begin{proof}
We first argue that a subset $S$ of size $|S|=k$ vertices in $T_d$ has at least $k(d-2)$ incident free branches:
\begin{itemize}
\item Pick a vertex $v \in S$, which of course alone has $d$ free branches;
\item Add vertices in $S\setminus\{v\}$ in the breadth-first order, these can remove a free branch but they also contribute $d-1$ more free branches in directions away from $v$.
\end{itemize}

Then, stop the infection at time $t$ and split it into $|\xi_t|$ different infection types that continue on the same graphical construction. A vertex $v \in \xi_t$ with $b$ incident $\xi_t$-free branches expects at time $t+s$
\[
\frac{b}{d}\left(
\mathbb{E}\left(
|\xi^{\{v\}}_s|
\right)
-\mathbb{P}\left( v \in \xi^{\{v\}}_s \right)
\right)+\mathbb{P}\left( v \in \xi^{\{v\}}_s \right)
\geq
\frac{b}{d}
\mathbb{E}\left(
|\xi_s|
\right)
\]
vertices infected with its type in its $b$ incident free branches. Hence
\[
\mathbb{E}\left(
|\xi_{t+s}|
\right)
\geq 
\mathbb{E}\left(
|\xi_{t}|
\right)
\cdot
\frac{d-2}{d}
\mathbb{E}\left(
|\xi_{s}|
\right)
\]
which with 
$
\mathbb{E}\left(
|\xi_{t+s}|
\right)
\leq 
\mathbb{E}\left(
|\xi_{t}|
\right)
\mathbb{E}\left(
|\xi_{s}|
\right)
$
 gives the result as in the proof of \cite[Proposition 1]{madrasschinazi}.
\end{proof}

We write down the consequence for the union as it will be required frequently.

\begin{corollary}\label{cor_union}
For the union we have an independent rate $1$ recovery at the first infection of every site contained in the history, excluding the final set which has not recovered, and so
\[
\e\left(
\Big|\bigcup_{s\leq t}\xi_s\Big|
\right)
\leq
\int_0^t
\e\left(
\big|\xi_s\big|
\right)
{\rm d}s
+
\e\left(
\big|\xi_t\big|
\right)
\leq
\frac{d}{d-2}
\left(
\frac{1}{c_\lambda}+1
\right)
e^{c_\lambda t}
\]
whenever $\lambda>\lambda_1$ so that $c_\lambda> 0$ by \cite[Proposition 4.27]{liggett1999stochastic}.
\end{corollary}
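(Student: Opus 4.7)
The plan is to decompose $\bigcup_{s\leq t}\xi_s$ into those vertices which are still infected at time $t$, contributing $|\xi_t|$, and those which have been infected at some point in $[0,t]$ but have since recovered. For the latter family, every such vertex must have experienced at least one (in fact, a last) recovery event in $[0,t]$, so its cardinality is dominated by the total count of recovery events in the time interval $[0,t]$.

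Next I would compute the expected number of recovery events. Since each vertex in $\xi_s$ recovers independently at Poisson rate $1$, the instantaneous rate of recovery events at time $s$ is exactly $|\xi_s|$. Taking expectations and applying Fubini gives that the expected total number of recoveries in $[0,t]$ equals $\int_0^t \e(|\xi_s|)\,\mathrm{d}s$. Combining with the contribution $\e(|\xi_t|)$ from vertices still infected at time $t$ gives the first inequality.

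For the second inequality I would plug in Proposition \ref{prop_cp_mean}, which yields $\e(|\xi_s|) \leq \tfrac{d}{d-2} e^{c_\lambda s}$. Since we are in the regime $\lambda > \lambda_1$ we have $c_\lambda > 0$ by \cite[Proposition 4.27]{liggett1999stochastic}, so the integral evaluates to at most $\tfrac{d}{d-2} \cdot \tfrac{1}{c_\lambda}\bigl(e^{c_\lambda t} - 1\bigr) \leq \tfrac{d}{(d-2)c_\lambda} e^{c_\lambda t}$. Adding the boundary term $\tfrac{d}{d-2} e^{c_\lambda t}$ and factoring produces the stated constant $\tfrac{d}{d-2}\bigl(\tfrac{1}{c_\lambda}+1\bigr)$.

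There is no real obstacle here; the only subtle point is the combinatorial fact that every ever-infected vertex not in $\xi_t$ contributes at least one recovery (its last one), so the map from such vertices into the recovery events of $[0,t]$ is injective and the upper bound by the expected number of recoveries is legitimate.
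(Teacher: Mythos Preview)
Your proposal is correct and follows essentially the same argument as the paper: the corollary's proof is embedded in its statement, and both you and the paper bound $|\bigcup_{s\le t}\xi_s\setminus\xi_t|$ by the total number of recovery events in $[0,t]$, whose expectation is $\int_0^t\e|\xi_s|\,\mathrm{d}s$, and then apply Proposition~\ref{prop_cp_mean}. The only cosmetic difference is that the paper phrases the injection via the \emph{first} recovery after first infection, whereas you use the \emph{last} recovery before time $t$; both maps are injective into the set of recovery events and yield the same bound.
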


%\begin{lemma}[Valesin/Schapira from email]\label{lemma_email}
%``$X_t$ denote the number of infected sites $\dots$ using some sub-additivity considerations, we show that for any $p$ larger than $1$, as $t$ goes to infinity, $E[X_t^p]^{1/t}=O(1)$''
%\end{lemma}
%

Now, we can move on to the higher moments.

\begin{proposition}\label{prop_cp_moments}
For any $\lambda>\lambda_1, d\geq 3$ the contact process $(\xi_t)_t$ has
\[
\mathbb{E} \Big( | \xi_t |^n \Big)  =
 O \left(
t^{n\log_2 
\tfrac{d}{d-2}
}
e^{ n c_\lambda t }
\right)
\]
for every $n\geq 2$.
\end{proposition}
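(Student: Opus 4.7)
The plan is to induct on $n$ (with $n=1$ being Proposition~\ref{prop_cp_mean}) by splitting the interval $[0,t]$ at $t/2$ and exploiting the Markov property. Using additivity of the contact process in the graphical construction,
\[
|\xi_t| \leq \sum_{v \in \xi_{t/2}} \bigl|\xi^{(v)}_{t/2}\bigr|,
\]
where $\xi^{(v)}$ denotes the contact process started from $\{v\}$ at time $t/2$ on the shared graphical construction.

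Raising to the $n$-th power, taking conditional expectation given $\xi_{t/2}$, and expanding the resulting multinomial over partitions of $[n]$ (grouping repeated indices), the hope is to extract a recursion of the form
\[
M_n(t) \;\leq\; M_1(t/2)^n\,M_n(t/2) \;+\; (\text{strictly lower order in } t),
\]
with $M_n(t) := \e|\xi_t|^n$. The leading term should come from the singleton-only partition: choosing $n$ distinct $v_i \in \xi_{t/2}$ and bounding the product $\prod_i |\xi^{(v_i)}_{t/2}|$ by approximate independence gives $|\xi_{t/2}|^n\,M_1(t/2)^n$ in conditional expectation, which becomes $M_n(t/2)\,M_1(t/2)^n$ after averaging. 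Every other partition has a block of size $\geq 2$ and contributes a factor of $M_k(t/2)$ for some $k\geq 2$ times a lower factorial power of $|\xi_{t/2}|$; by the inductive hypothesis these are controlled and strictly dominated at large $t$ by the leading term, because $(d/(d-2))^n e^{c_\lambda t/2}$ beats any polynomial in $t/2$.

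Iterating the recursion $\lceil \log_2 t \rceil$ times until the residual time is $O(1)$, and substituting $M_1(s) \leq \tfrac{d}{d-2}e^{c_\lambda s}$ at every level from Proposition~\ref{prop_cp_mean}, produces a geometric sum $\sum_{j} n c_\lambda t/2^j \to n c_\lambda t$ in the exponent and a product $\prod_j (d/(d-2))^n = t^{n \log_2 (d/(d-2))}$ for the prefactor, which is exactly the claimed order. The main obstacle is justifying the ``approximate independence'' of the $\xi^{(v)}_{t/2}$: they share the graphical construction and are positively correlated, so H\"older's inequality alone yields the coarser bound $M_n(t/2)$ rather than the sharper $M_1(t/2)^n$ needed above. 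I expect this is resolved by exploiting the tree structure of $\T$: descendants of distinct $v\in\xi_{t/2}$ strictly inside the disjoint subtrees hanging off each $v$ are genuinely conditionally independent given $\xi_{t/2}$, and the leakage of infections between branches can be absorbed into the lower-order error using the boundary estimate from Proposition~\ref{prop_cp_mean} that already controls backward-facing reinfections on the tree.
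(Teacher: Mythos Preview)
Your dyadic time-halving recursion is exactly the paper's strategy, and the recursion you aim for is the right one. The obstacle you name is also genuine: on the shared graphical construction the $\xi^{(v)}_{t/2}$ are positively correlated (by FKG), so $\e\bigl[\prod_i|\xi^{(v_i)}|\bigr]\geq\prod_i\e|\xi^{(v_i)}|$ and you cannot simply replace the product by $M_1(t/2)^n$ for an upper bound. However, your proposed fix via disjoint subtrees goes the wrong way: restricting each $\xi^{(v)}$ to a severed subtree hanging off $v$ produces independent \emph{lower} bounds on the $|\xi^{(v)}|$, not upper bounds. The ``leakage'' outside those subtrees is precisely the positively-correlated part you would need to control from above, and nothing in Proposition~\ref{prop_cp_mean} bounds it; that proposition only counts free branches, it says nothing about how much infection escapes back through the occupied ones.

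The paper sidesteps the issue with a much simpler observation: there is a genuine stochastic domination
\[
|\xi_t|\ \preceq\ \sum_{i=1}^{|\xi_{t/2}|}\bigl|\xi^{(i)}_{t/2}\bigr|
\]
with the $\xi^{(i)}$ i.i.d.\ and independent of $\xi_{t/2}$. One coupling that realises this: give each $v\in\xi_{t/2}$ its own independent graphical construction $G_v$, run $\xi^{(v)}$ on $G_v$, and couple the true process on $[t/2,t]$ by letting every infected site carry a single label (inherited from its infector, initially $v$ at $v$) indicating which $G_v$ it currently listens to for recoveries and outgoing infections; then the label-$v$ set stays inside $\xi^{(v)}$ pathwise, and summing over labels gives the inequality. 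With honest independence in hand, the paper obtains the $n=2$ recursion from the second-moment identity for a random i.i.d.\ sum (Wald), iterates it $\lceil\log_2 t\rceil$ times exactly as you describe, and then handles $n\geq3$ via Rosenthal's inequality using the already-established second moment. Your partition-of-$[n]$ expansion would work equally well once the independent copies are in place.
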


\begin{remark}
In \cite[Section 3.1]{schapira2023contact} it is proved that
\[
\left(
\lim_{t \rightarrow \infty}
\mathbb{E} \Big( | \xi_t |^n \Big)^{\nicefrac{1}{t}}
\right)^{\nicefrac{1}{n}}
\]
is non-decreasing in $n$. With the above result, we can say that this sequence is constantly $e^{c_\lambda}$.
\end{remark}

%Certainly we have no reason to believe this is a tight upper bound on the second moment, which we conjecture is in fact $\Theta \left(
%e^{ 2 c_\lambda t }
%\right)$ as the Galton-Watson lower bound, but to our knowledge there is no better upper bound in the literature.

\begin{proof}[Proof of the $n=2$ case of Proposition \ref{prop_cp_moments}]
First, recall that from \cite[Propositions 4.27 and 4.39]{liggett1999stochastic} we have equivalence between $\lambda\geq\lambda_1$ and $c_\lambda\geq 0$. We can upper bound by the Yule process, the Poly\'a urn in which every particle splits at rate $1$: \[\xi_t \preceq Y_{\lambda d t}.\] 

Because the Yule process has a geometric point distribution,
\[
\p(Y_t=k)
=
e^{-t}(1-e^{-t})^{k-1},
\]
we deduce for some initial time $\delta >0$
\begin{equation}\label{eq_yule_variance}
\mathbb{E} \Big( | \xi_\delta |^2 \Big)  \leq
\mathbb{E} \Big( | Y_{\delta \lambda d} |^2 \Big)
=\frac{2-e^{-\delta \lambda d}}{e^{-2\delta \lambda d}}
\leq 2 e^{\delta \lambda d}.
\end{equation}

Write $t=2^k \delta$ where $k=\lceil \log_2 t \rceil$. Then, using the graphical construction of \cite[Chapter III.6]{liggett}, we can upper bound the continuing contact process from time $2^{k-1} \delta$ by independent copies $(\xi^{(1)}_t)_t,(\xi^{(2)}_t)_t,\dots$ of the contact process on regular tree. This gives
\[
\left|\xi_{t}\right|
\preceq
\sum_{i=1}^{\left|\xi_{2^{k-1} \delta}\right|}\left|\xi^{(i)}_{2^{k-1} \delta}\right|.
\]

Expanding the squared sum and using Wald's equation (or equivalently the first line could follow from the usual expression for the Galton--Watson variance at generation $2$ \cite[Chapter I.A.2]{MR2047480}),
\[
\begin{split}
\mathbb{E} \Big( | \xi_t |^2 \Big)  &\leq
\mathbb{E} \Big( | \xi_{2^{k-1} \delta} | \Big) \mathbb{E} \Big( | \xi_{2^{k-1} \delta} |^2 \Big) 
+
\mathbb{E} \Big( | \xi_{2^{k-1} \delta} |^2 
-
 | \xi_{2^{k-1} \delta} | \Big) 
\mathbb{E} \Big( | \xi_{2^{k-1} \delta} | \Big)^2\\
&\leq
\left(
\left(
\frac{d}{d-2}
\right)^2
e^{c_\lambda 2^k \delta}
+
\frac{d}{d-2}
e^{c_\lambda 2^{k-1} \delta}
\right)
\mathbb{E} \Big( | \xi_{2^{k-1} \delta} |^2 \Big) 
-
e^{3c_\lambda 2^{k-1} \delta}\\
&\leq
\left(
1+e^{-c_\lambda 2^{k-1} \delta}
\right)
\left(
\frac{d}{d-2}
\right)^2
e^{c_\lambda 2^k \delta}
\mathbb{E} \Big( | \xi_{2^{k-1} \delta} |^2 \Big) 
\end{split}
\]
requiring in the final line that $c_\lambda\geq 0$. 
Then, iteratively reducing $k$ and recalling \eqref{eq_yule_variance},

\begin{equation}\label{eq_second_moment}
\begin{split}
\mathbb{E} \Big( | \xi_t |^2 \Big) &\leq
\mathbb{E} \Big( | \xi_\delta |^2 \Big)
\prod_{i=1}^{k}
\left(
1+e^{-c_\lambda 2^{i-1} \delta}
\right)
\left(
\frac{d  }{d-2}
\right)^2
\exp\left( c_\lambda 2^i \delta \right)\\
&\leq 
2e^{\delta \lambda d
+
\sum_{i=1}^k e^{-c_\lambda 2^{i-1} \delta}
}
\left( 
\frac{d  }{d-2}
\right)^{2k}
\exp\left( 2 c_\lambda t  \right)\\
&=
O \left(
t^{2\log_2 
\tfrac{d }{d-2}
}
\,
e^{ 2 c_\lambda t }
\right)
\end{split}
\end{equation}
as claimed.
\end{proof}

\begin{proof}[Proof of the $n\geq 3$ case of Proposition \ref{prop_cp_moments}]
We use essentially the same structure as the previous proof. First, the Yule process bound yields
\[
\mathbb{E} \Big( | \xi_\delta |^n \Big)  \leq
\mathbb{E} \Big( | Y_{\delta \lambda d} |^n \Big)
=\sum_{a=1}^\infty a^n e^{-\lambda t} (1-e^{-\lambda t})^{a-1}
<\infty
\]

Write $m_s:=\mathbb{E}\left( \left|\xi_{s}\right|\right)$ for the mean growth, and we have the same time doubling upper bounds 
\[
\left|\xi_{2s}\right|
\preceq
\sum_{i=1}^{\left|\xi_{s}\right|}\left|\xi^{(i)}_{ s}\right|.
\]
for i.i.d. $\xi_s, \xi_s^{(1)}, \xi_s^{(2)} \dots$

Because we already bounded the second moment, we can use Rosenthal's inequality \cite[Theorem 9.1]{gut} for some universal constant $D_n$ to derive
\[
\frac{1}{D_n}
\mathbb{E}\Bigg(
\bigg(
\sum_{i=1}^{\left|\xi_{s}\right|}\left(\left|\xi^{(i)}_{s}\right|-m_s\right)
\bigg)^n
\Bigg| \xi_s
\Bigg)
\leq
\left(
\left|\xi_s\right|
\mathbb{V}{\rm ar}\left|\xi_s\right|
\right)^{\nicefrac{n}{2}}
+\xi_s
\mathbb{E}\left(
\left(
\left|\xi_{s}\right|-m_s
\right)^n
\right)
\]
so by taking expectations and applying \eqref{eq_second_moment}
\[
\begin{split}
\mathbb{E}\left(\Bigg|
\sum_{i=1}^{\left|\xi_{s}\right|}\left|\xi^{(i)}_{ s}\right|
-m_s\left|\xi_{s}\right| \Bigg|^n \right)
&=
\mathbb{E}\left(
\Bigg|
\sum_{i=1}^{\left|\xi_{s}\right|}\left(\left|\xi^{(i)}_{s}\right|-m_s\right)
\Bigg|^n
\right)\\
&\leq
\left(
 m_s^{3+O(\nicefrac{\log s}{s})}
\right)^{\nicefrac{n}{2}}
+
m_s \mathbb{E}\left( D_n
\left(
\left|\xi_{s}\right|-m_s
\right)^n
\right)\\
\end{split}
\]
leading to the moment relation %(by Minkowski's inequality)
\begin{equation}\label{eq_doubling_time}
\begin{split}
\mathbb{E}\left(\left|\xi_{2s}\right|^n \right)^{\nicefrac{1}{n}}
&\leq
\mathbb{E}\left(\Bigg|
\sum_{i=1}^{\left|\xi_{s}\right|}\left|\xi^{(i)}_{ s}\right|
\Bigg|^n \right)^{\nicefrac{1}{n}}
\leq
\mathbb{E}\left(\Bigg|\sum_{i=1}^{\left|\xi_{s}\right|}\left|\xi^{(i)}_{ s}\right|-m_s\left|\xi_{s}\right| \Bigg|^n \right)^{\nicefrac{1}{n}}
+
\mathbb{E}\left(m_s^n\left|\xi_{s}\right|^n \right)^{\nicefrac{1}{n}}\\
&\leq
%m_s\mathbb{E}\left(\left|\xi_{s}\right|^k \right)^{\nicefrac{1}{k}}
%+
\left(
 m_s^{3+o(1)}
\right)^{\nicefrac{1}{2}}
+
m_s^{\nicefrac{1}{n}}\mathbb{E}\left( D_n
\left(
\left|\xi_{s}\right|-m_s
\right)^n
\right)^{\nicefrac{1}{n}}
+m_s \mathbb{E}\left(\left|\xi_{s}\right|^n \right)^{\nicefrac{1}{n}}\\
&=%\left(1+o(1)\right)
m_s \mathbb{E}\left(\left|\xi_{s}\right|^n \right)^{\nicefrac{1}{n}}
+ m_s^{\nicefrac{3}{2}+o(1)}
.
\end{split}
\end{equation}
%\JFtodo{can't do much with the nice error term if there's a $d/(d-2)$ factor from $m_s$}
%Finally, once again for $t=2^k \delta$ where $k=\lceil \log_2 t \rceil$,
%\[
%\begin{split}
%\mathbb{E}\left(\left|\xi_{t}\right|^k \right)^{\nicefrac{1}{k}}
%&\leq
%m_\delta \sum
%\end{split}
%\]
Finally, write
\[
\mathbb{E} \Big( | \xi_t |^n \Big)  =
f(t)^n
t^{n\log_2 
\tfrac{d}{d-2}}
e^{ n c_\lambda t }
\]
and by inserting this and Proposition \ref{prop_cp_mean} into \eqref{eq_doubling_time} we see, for $t$ large enough,
$
f(2t)
\leq
f(t)
+
e^{-\tfrac{1}{2} c_\lambda t }
$
which implies $f(t)=O(1)$.
\end{proof}

To lower bound the spread of infection by a Galton--Watson process, the following concept will be useful in providing independent forward infections.

\begin{definition}[Pioneer points]\label{def_pioneer}
A process $(\zeta_t)_t$ has at time $t$ the set of pioneers
\[
\pion \zeta_t :=
\Bigg(
\partial \bigcup_{s\leq t} \zeta_s
\Bigg)
\cap \zeta_t
\]
i.e. points on the frontier of the infection history. In the weakly supercritical contact process on the regular tree it is natural to expect that the majority of infected vertices would be pioneers in this sense.
\end{definition}

Given a set of pioneers, we automatically can lower bound the future infection by a \emph{severed} contact process $\eta_t$ growing from each, i.e. the contact process on the severed tree, where $\de(o)=1$ and all other degrees are $d$ (see Figure~\ref{fig_severed}).

\begin{figure}
\centering
\includegraphics[width=0.66\textwidth]{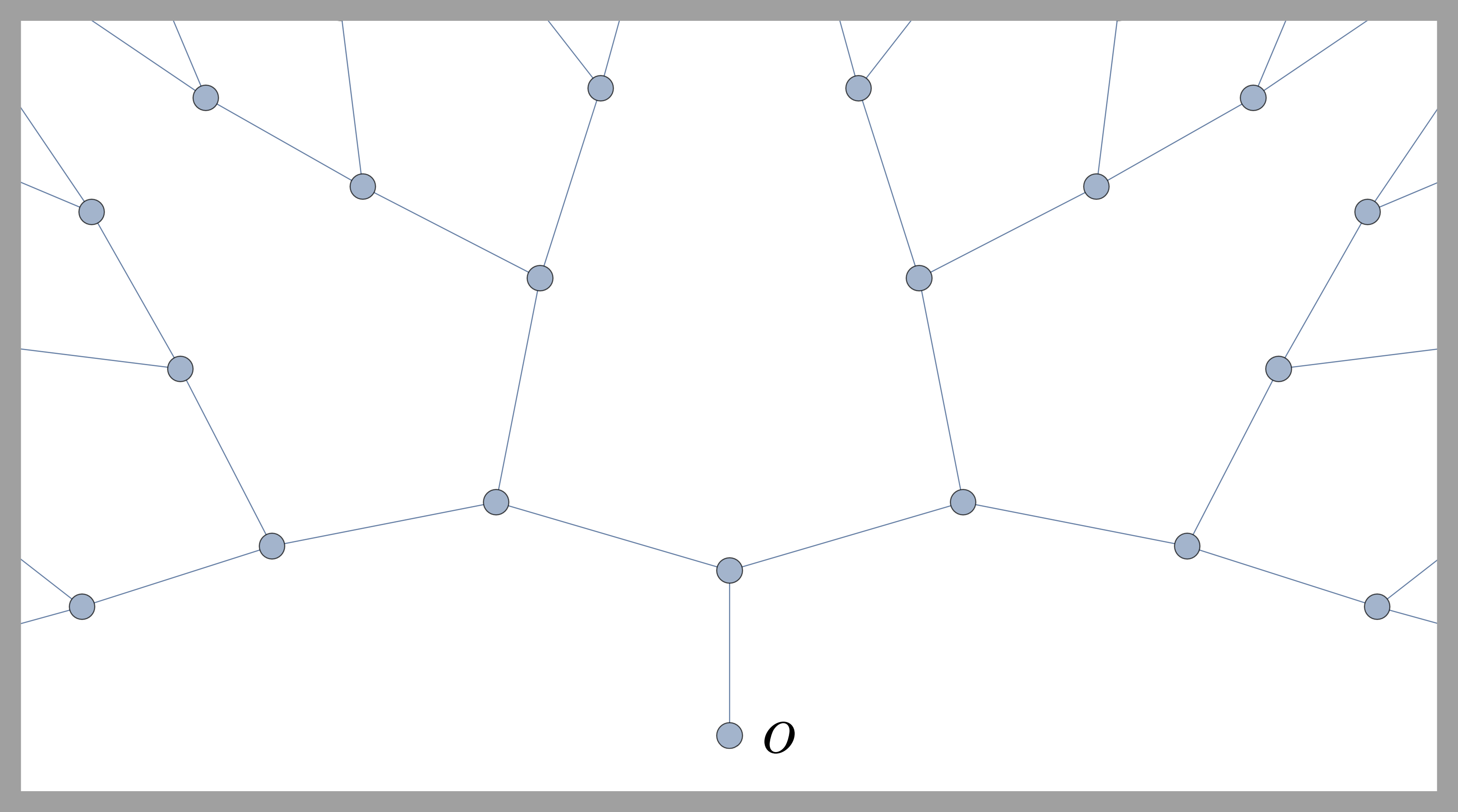}
%\caption{A ball of radius $6$ around the root $o$ in the severed tree with $d=3$.}\label{fig_severed}
\caption{A neighbourhood of the root $o$ in the severed tree with $d=3$.}\label{fig_severed}
\end{figure}

Now we can prove a result from Section \ref{sec_main} on the left tail of the infection. Note the symmetrical upper bound follows from the first moment of Proposition \ref{prop_cp_mean} and Markov's inequality; in this problem controlling this lower deviation from the mean is the harder direction.

\begin{proposition}\label{prop_sev_growth_lower_bound}
The severed contact process $(\eta_t)_t$,  from $\eta_0=\{o\}$, has
\[
\p\left(
\log | \eta_t | \leq c_\lambda t - 
t^{\delta}
\, \Big| \,
\eta_t \neq\emptyset
\right)
\leq e^{-t^{\nicefrac{\delta}{4}}}
\]
for any $\delta\in (0,1)$.
\end{proposition}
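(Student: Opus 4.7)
The plan is to iterate the pioneer decomposition of Definition~\ref{def_pioneer} across many time stages, using Proposition~\ref{prop_cp_moments} to concentrate each one. Split $[0,t]$ into a short warmup window $[0,s_0]$ and then $K$ stages of common length $s=(t-s_0)/K$, with $K\asymp t^\delta$ and $s\asymp t^{1-\delta}$. At time $s_0+is$ the pioneers $\pion\eta_{s_0+is}$ each have at least one unexplored neighbour, and these neighbours are roots of pairwise disjoint subtrees. Restricting $\eta$ to these subtrees over $[s_0+is,s_0+(i+1)s]$ yields $M_i:=|\pion\eta_{s_0+is}|$ independent copies of a severed contact process, giving the stochastic lower bound
\[
|\eta_{s_0+(i+1)s}| \succeq \sum_{j=1}^{M_i}\bigl|\eta^{(i,j)}_s\bigr|.
\]

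For stagewise concentration, $\mu_s:=\e|\eta_s|\asymp e^{c_\lambda s}$ from Proposition~\ref{prop_cp_mean} and $\var|\eta_s|=s^{O(1)}\mu_s^2$ from Proposition~\ref{prop_cp_moments}, so Chebyshev applied conditionally on $M_i$ gives
\[
\p\bigl(|\eta_{s_0+(i+1)s}|\leq \tfrac12 M_i\mu_s \,\big|\, M_i\bigr) \leq \frac{s^{O(1)}}{M_i}.
\]
A short combinatorial lemma on $\T$ then converts this into $M_{i+1}\geq c_0|\eta_{s_0+(i+1)s}|$ for some constant $c_0>0$, using that non-pioneers must have all $d$ neighbours touched --- a rare configuration controlled by Proposition~\ref{prop_cp_mean} applied to the descendant subtree of each vertex. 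Provided $M_i\geq e^{t^{\delta/2}}$ throughout, each per-stage failure probability is $e^{-t^{\delta/2}+O(\log t)}$, a union bound over $K=O(t^\delta)$ stages preserves stretched-exponential smallness, and the compounding multiplicative losses $(c_0/2)^K$ contribute a factor $e^{-O(t^\delta)}$, absorbed inside the allowed deviation $e^{c_\lambda t - t^\delta}$.

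It remains to seed the iteration: one must show that $M_1\geq e^{t^{\delta/2}}$ with probability at least $1-e^{-t^{\delta/4}}/2$ conditionally on $\eta_{s_0}\neq\emptyset$. This is the principal obstacle, since from a single infected vertex the variance-to-mean ratio of $|\eta_{s_0}|$ is only polynomial in $s_0$, so Chebyshev alone is insufficient. The natural route is a self-improving bootstrap on a warmup horizon $s_0=\Theta(t^{\delta/2})$: the weaker in-probability bound of Lalley--Su (cited after the statement) provides initial crude concentration, and a first inner iteration of the same stagewise scheme amplifies it into the required stretched-exponential tail. Making this bootstrap quantitative, together with a uniform constant $c_0$ in the pioneer-fraction lemma, is the main technical work.
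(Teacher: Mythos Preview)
Your iterative stage scheme is close in spirit to the paper's argument, and once you have a large seed of independent severed copies the Chebyshev step works exactly as you describe. The paper in fact uses only \emph{one} long second stage (time $T\sim t$) after seeding, rather than $K\asymp t^\delta$ stages, but this is a cosmetic difference: the variance bound of Proposition~\ref{prop_cp_moments} is strong enough either way.

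The real gap is the seeding, which you correctly identify as the principal obstacle but do not resolve. Your proposed bootstrap relies on the Lalley--Su bound as a crude input, but that result carries no rate whatsoever: it says only that $\p(\log|\eta_s|\le (c_\lambda-\epsilon)s\mid\eta_s\neq\emptyset)\to 0$. An inner iteration of your Chebyshev scheme needs its own seed with a quantitative tail, so the recursion never bottoms out. The paper's solution is different and is really the heart of the proof: for a fixed window $U$, the higher moments of Proposition~\ref{prop_cp_moments} feed into a Paley--Zygmund (H\"older) inequality to give
\[
\mathbb{P}\!\left(|\eta_U|\ge \tfrac12\,\e|\eta_U|\right)\;\ge\;\Omega\!\left(U^{-\frac{k}{k-1}\log_2\frac{d}{d-2}}\right),
\]
i.e.\ a \emph{polynomial} lower bound on the one-shot success probability. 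Conditionally on survival (handled via a Galton--Watson lower bound whose survival event sandwiches $\{\eta_\infty\neq\emptyset\}$ up to $e^{-\Omega(U)}$), one may restart from a surviving vertex after each failure; after $U^{(1+\epsilon)\log_2\frac{d}{d-2}}$ attempts the failure probability is stretched-exponential in $U$. Choosing $U$ so that $U^{2+\log_2\frac{d}{d-2}}\sim t^\delta$ then gives the claimed $e^{-t^{\delta/4}}$. This retry-under-survival mechanism is the missing idea.

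Two smaller points. First, you do not need pioneers for the branching lower bound: any vertex of $\eta_t$ with an $\eta_t$-free incident branch (not a $\bigcup_{s\le t}\eta_s$-free one) launches an independent severed process after time $t$ by the Markov property, and the proof of Proposition~\ref{prop_cp_mean} gives a deterministic $(1-\tfrac{2}{d})$ fraction of such vertices. Your justification that non-pioneers are ``rare'' via Proposition~\ref{prop_cp_mean} does not follow from that proposition and is in fact proved only later as Corollary~\ref{cor_pion_growth_lower_bound}, using the present result. Second, you do not address how the conditioning $\{\eta_t\neq\emptyset\}$ interacts with the stagewise restarts; the paper handles this by comparing $\{\eta_\infty\neq\emptyset\}$, $\{\text{GW survival}\}$, and $\{\eta_U\neq\emptyset\}$ explicitly.
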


\begin{proof}%[Proof of Proposition \ref{prop_sev_growth_lower_bound}]
%The severed contact process $(\eta_t)_t$ is of course stochastically dominated by the contact process on the regular tree $(\xi_t)_t$, so we only need to consider the severed version.
To produce a lower bound, in the first stage we stop the infection when it has exactly $\left\lfloor \tfrac{1}{2}\mathbb{E} \big( | \eta_U | \big) \right\rfloor$ infected vertices, for some large $U$. This is done by constructing a Galton--Watson lower bound with timesteps of length $U$, after each timestep we reduce to see i.i.d. severed processes.

In Proposition \ref{prop_cp_mean} we argued that at least proportion $\left(1-\nicefrac{2}{d}\right)$ of any set in the regular tree must have a free branch, and in the severed case at most one of these branches can contain the root. In this way we have at least
\[
\left\lfloor
\left(1-\frac{2}{d}\right) \eta_U
\right\rfloor
 -1
\]
severed processes to continue the Galton--Watson lower bound. Now the claimed result has conditioning $\{\eta_\infty\neq\emptyset\}$, an event which contains the event of our Galton--Watson lower bound surviving. Moreover because for any positive time we have a positive probability to produce at least $2$ pioneers, that branching structure provides
\[
\p(\eta_\infty\neq\emptyset)
=
\p(\eta_U\neq\emptyset)
+e^{-\Omega(U)}.
\]

This, combined with the inclusion
\[
\{\eta_\infty\neq\emptyset\}
\subset
\{\text{GW survival}\}
\subset
\{\eta_U\neq\emptyset\}
\]
leads to
\begin{equation}\label{eq_conditioning}
\begin{split}
\p\left(
\log | \eta_t | \leq c_\lambda t - 
t^{\delta}
\, \Big| \,
\eta_{\infty} \neq\emptyset
\right)
&=
\frac{
\p\left(
\log | \eta_t | \leq c_\lambda t - 
t^{\delta}
\, ; \,
\text{GW survival}
\right)-e^{-\Omega(U)}}{
\p(\text{GW survival})
-e^{-\Omega(U)}}\\
&\leq
\p\left(
\log | \eta_t | \leq c_\lambda t - 
t^{\delta}
\, \Big| \,
\text{GW survival}
\right)+e^{-\Omega(U)}.
\end{split}
\end{equation}

To hit $\left\lfloor \tfrac{1}{2}\mathbb{E} \big( | \eta_U | \big) \right\rfloor$, we have from \cite[Proposition 2.3]{lalleysu} as $U\rightarrow\infty$
\[
\mathbb{E} \big( | \eta_U | \big)
=\Omega\left(
e^{c_\lambda U}
\right),
\]
we can then apply H\"older's inequality as in \cite[Lemma 4.4]{paleyzygmund} with Proposition \ref{prop_cp_moments} to see
\[
\mathbb{P}\left(| \eta_U | \geq \tfrac{1}{2}  \mathbb{E} \big( | \eta_U | \big) \right)
 \geq
 \frac {\left(\tfrac{1}{2}\mathbb{E} \big( | \eta_U | \big)\right)^{\nicefrac{k}{(k-1)}}}{\mathbb{E} \big( | \eta_U |^k \big)^{\nicefrac{1}{(k-1)}}}
 \geq
 \frac {\left(\tfrac{1}{2}\mathbb{E} \big( | \eta_U | \big)\right)^{\nicefrac{k}{(k-1)}}}{\mathbb{E} \big( | \xi_U |^k \big)^{\nicefrac{1}{(k-1)}}}
 =\Omega \left(
U\right)^{-\tfrac{k}{k-1} \log_2 \tfrac{d}{d-2}}.
\]

Recalling the containment $\{\eta_\infty\neq\emptyset\}
\subset
\{\text{GW survival}\}
\subset
\{\eta_U\neq\emptyset\}$,
\[
\begin{split}
\mathbb{P}\left(| \eta_U | \geq \tfrac{1}{2}  \mathbb{E} \big( | \eta_U | \big) ; \text{GW survival} \right)
&\geq
\left(
\mathbb{P}\left(| \eta_U | \geq \tfrac{1}{2}  \mathbb{E} \big( | \eta_U | \big) \right) - \p(\{\eta_U\neq\emptyset\} \setminus \text{GW survival})
\right)\\
&=\mathbb{P}\left(| \eta_U | \geq \tfrac{1}{2}  \mathbb{E} \big( | \eta_U | \big)  \right)+e^{-\Omega(U)},
\end{split}
\]
and so we have this $\Omega \left(
U\right)^{-\tfrac{k}{k-1} \log_2 \tfrac{d}{d-2}}$ probability in the conditioned Galton--Watson lower bound.

We check if this happens in the first generation, i.e. by time $U$, and otherwise forget the past and restart the argument from some infected vertex (in a Galton--Watson process conditioned to survive, at least one of its offspring is a Galton--Watson process conditioned to survive).% Note also that if the severed lower bound ever goes extinct, we can use the conditioning $\{\eta_{t} \neq\emptyset\}$ to find at least one infected vertex from which to start a new lower bound.

In this way, for any $\epsilon > 0$ we fail to hit $\left\lfloor \tfrac{1}{2}\mathbb{E} \big( | \eta_U | \big) \right\rfloor$ before time
\[U^{1+(1+\epsilon)\log_2 \tfrac{d}{d-2}}\]
with probability bounded by
\begin{equation}\label{eq_hitting}
\begin{split}
\left(
1-\Omega \left(
U\right)^{-\tfrac{k}{k-1} \log_2 \tfrac{d}{d-2}}
\right)^{U^{(1+\epsilon)\log_2 \tfrac{d}{d-2}}}
&\leq
\exp\left(
-
U^{(1+\epsilon)\log_2 \tfrac{d}{d-2}}
\Omega \left(
U\right)^{-\tfrac{k}{k-1} \log_2 \tfrac{d}{d-2}}
\right)\\
&\leq 
\exp\left(-U^{(1-\varphi)\epsilon\log_2 \tfrac{d}{d-2}}\right)\\
\end{split}
\end{equation}
for any small $\varphi>0$ by considering $k$ sufficiently large.

Recall that after reducing this number with the mechanic of the Galton--Watson lower bound, we have at least
\[
\left\lfloor
\left(1-\frac{2}{d}\right) \frac{1}{2}  \mathbb{E} \big( | \eta_U | \big)
\right\rfloor -1=:I
\]
severed processes that can run independently from the initial set that was just found.

The remaining time is $T$,
\[
t-U^{1+(1+\epsilon)\log_2 \tfrac{d}{d-2}}=T,
\]
and the later lower bound is $I$ i.i.d. severed processes $\eta^{a}_T$. The total lower bound at time $t$ is thus
\[
I \mathbb{E} \big( | \eta_T | \big)+
\sum_{a=1}^I
\left(
\eta^{a}_T
-\mathbb{E} \big( | \eta_T | \big)
\right).
\]

It remains to look at concentration of the random term above. By Proposition \ref{prop_cp_moments}
\[
\mathbb{E}\left[\left(
\sum_{a=1}^I
\left(
\eta^{a}_T
-\mathbb{E} \big( | \eta_T | \big)
\right)
\right)^{2}
\right]
\leq
I \mathbb{E} \big( | \eta_T |^2 \big)= O \left( e^{ c_\lambda U }
T^{2\log_2 
\tfrac{d}{d-2}
}
e^{ 2 c_\lambda T }
\right),
\]
that is to say
\[
\mathbb{E}\left[\left(
I \mathbb{E} \big( | \eta_T | \big)-\eta_t
\right)^{2}
\right]
\leq
T^{2\log_2 
\tfrac{d}{d-2}
}
e^{ c_\lambda (2T+U +O(1)) }.
\]

It follows from Markov's inequality

\begin{equation}\label{eq_markov}
\begin{split}
\p\left(
\eta_t\leq
\frac{I}{2}
\mathbb{E} \big( | \eta_T | \big)
\right)
&\leq
\frac{4}{I^{2} \mathbb{E} \big( | \eta_T | \big)^{2}}
T^{2\log_2 
\tfrac{d}{d-2}
}
e^{ c_\lambda  (2T+U +O(1)) }\\
&\leq
T^{2\log_2 
\tfrac{d}{d-2}
}
e^{ c_\lambda  (-U +O(1)) }.
\end{split}
\end{equation}

%So, this event has sufficiently high probability for our claimed result.
This event gives 
\[
\eta_t\geq
\frac{I}{2}
\mathbb{E} \big( | \eta_T | \big)
=\Omega\left( 
e^{c_\lambda (U+T)}
\right)
=\Omega\Bigg( 
e^{c_\lambda \Big( t+U- 
U^{1+(1+\epsilon)\log_2 \tfrac{d}{d-2}}\Big)}
\Bigg)
\]
and thus by combining \eqref{eq_markov}, \eqref{eq_hitting} and \eqref{eq_conditioning}
\[
\p\left(
\log | \eta_t | \leq c_\lambda t - 
U^{1+(1+\epsilon)\log_2 \tfrac{d}{d-2}}
\, \bigg| \,
\eta_\infty \neq\emptyset
\right)\leq 
e^{ c_\lambda  (-U +O(\log U)) }
+e^{-U^{(1-\varphi)\epsilon\log_2 \tfrac{d}{d-2}}}
+e^{-\Omega(U)}
\]
and by setting $\epsilon=\frac{1}{\log_2 \tfrac{d}{d-2}}$
\[
\p\left(
\log | \eta_t | \leq c_\lambda t - 
U^{2+\log_2 \tfrac{d}{d-2}}
\, \bigg| \,
\eta_t \neq\emptyset
\right)=e^{-\Omega(t)}+
O\Big(
e^{-U^{(1-\varphi)}}
\Big)=
O\Big(
e^{-U^{(1-\varphi)}}
\Big).
\]

Now because $2+\log_2 \frac{d}{d-2}
<
4$ we can reparametrise to achieve 
\[
\p\left(
\log | \eta_t | \leq c_\lambda t - 
t^{\delta}
\, \Big| \,
\eta_t \neq\emptyset
\right)\ll
e^{-t^{\nicefrac{\delta}{4}}}
\]
or $e^{-t^{\nicefrac{\delta}{(2+o(1))}}}$, if you like, for larger $d$.
\end{proof}

We also need a stronger version of this result applying to the pioneers, which follows from the previous with a proof adapted from \cite[Proposition 2.6]{lalleysu}.

\begin{corollary}\label{cor_pion_growth_lower_bound}
The severed contact process $(\eta_t)_t$,  from $\eta_0=\{o\}$, has
\[
\p\left(
\log | \pion \eta_t | \leq c_\lambda t - 
t^{\delta}
\, \Big| \,
\eta_t \neq\emptyset
\right)
\leq e^{-t^{\nicefrac{\delta}{4}}}
\]
for any $\delta\in (0,1)$.
\end{corollary}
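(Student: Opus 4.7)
The plan is to upgrade Proposition~\ref{prop_sev_growth_lower_bound} from $|\eta_t|$ to $|\pion\eta_t|$ by sacrificing a short final epoch. Fix a constant $s = s(\lambda,d) > 0$ to be chosen and apply Proposition~\ref{prop_sev_growth_lower_bound} at time $t - s$: on $\{\eta_\infty \neq \emptyset\}$, except on an event of probability at most $e^{-(t-s)^{\delta/4}}$, we have $\log |\eta_{t-s}| \geq c_\lambda(t-s) - (t-s)^\delta$. It then suffices to show that $|\pion \eta_t| \geq c_0 |\eta_{t-s}|$ for some constant $c_0 > 0$ with sufficiently high conditional probability; the multiplicative $O(1)$ loss and the exponential shift $c_\lambda s$ are both negligible against the $t^\delta$ slack we have to give.

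To produce pioneers cheaply, identify
\[
M := \{ v \in \eta_{t-s} \,:\, v \text{ was first infected in } [t-2s, t-s] \text{ and no child of } v \text{ lies in } \textstyle\bigcup_{u \leq t-s} \eta_u \},
\]
the ``recently-infected and unexplored'' subset of $\eta_{t-s}$. For each $v \in M$ let $E_v$ be the future-only event that during $[t - s, t]$ the vertex $v$ neither recovers nor sends infection along any of its $d-1$ child edges. Then $\p(E_v \mid \mathcal{F}_{t-s}) \geq e^{-s} \cdot e^{-\lambda (d-1) s} =: p > 0$, and on $E_v$ the vertex $v$ is a pioneer at time $t$, because every child of $v$ has remained completely unvisited (in the severed tree a child can only be infected through its parent $v$). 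Crucially, distinct $E_v$ depend on disjoint future Poisson clocks --- the recovery clock at $v$ and the $d-1$ child-edge clocks of $v$ --- so $(E_v)_{v \in M}$ are mutually independent given $\mathcal{F}_{t-s}$. Hoeffding's inequality then yields
\[
|\pion \eta_t| \geq \sum_{v \in M} \1_{E_v} \geq (p/2) |M|
\]
with conditional probability $\geq 1 - e^{-\Omega(|M|)}$, the pioneers being manifestly distinct across $v \in M$.

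The main obstacle is thus the high-probability lower bound $|M| \geq c_0 |\eta_{t-s}|$, rather than only in expectation. Heuristically this has three ingredients: (i) the flux of new infections into the history during $[t-2s, t-s]$ is a constant fraction of $|\eta_{t-s}|$ by exponential growth, made rigorous by applying Proposition~\ref{prop_sev_growth_lower_bound} at both $t - 2s$ and $t - s$ together with the first-moment control of Corollary~\ref{cor_union}; (ii) any such recently-infected $v$ has, with positive constant probability $e^{-\lambda(d-1) s}$, not yet transmitted to any child during its short infectious lifetime so far; (iii) these Bernoulli indicators are conditionally independent across $v$'s and concentrate by Chernoff. Combining the three error events produces $\log |\pion \eta_t| \geq c_\lambda t - t^\delta$ at the required probability level for $t$ large, after a harmless reparametrisation of $\delta$.
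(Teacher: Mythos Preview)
Your approach has a genuine gap at step (i), and it stems from taking $s$ constant. The only two-sided control you have is
\[
|\eta_{t-s}| \geq e^{c_\lambda (t-s) - (t-s)^{\delta}}
\qquad\text{and}\qquad
\Big|\bigcup_{u\le t-2s}\eta_u\Big| \le e^{c_\lambda (t-2s) + (t-2s)^{\delta/4}},
\]
the first from Proposition~\ref{prop_sev_growth_lower_bound} and the second from Corollary~\ref{cor_union} together with Markov at the $e^{-t^{\delta/4}}$ error level you require. With $s$ fixed, the multiplicative slack $e^{\pm t^{\delta}}$ on both sides overwhelms the constant separation $e^{c_\lambda s}$: the history bound at $t-2s$ is eventually \emph{larger} than the lower bound on $|\eta_{t-s}|$, so you cannot deduce that any positive fraction of $\eta_{t-s}$ consists of vertices first infected after $t-2s$. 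The same obstruction blocks the ``flux'' reading of (i). Taking $s$ to grow does not obviously rescue the scheme either: you need $s\gg t^{\delta}$ to win the comparison in (i), but then the thinning probabilities $e^{-\lambda(d-1)s}$ in (ii) and $p=e^{-(1+\lambda(d-1))s}$ in the forward step cost you $e^{-\omega(t^{\delta})}$, more than the $t^{\delta}$ budget. Steps (ii)--(iii) also need care, since the conditioning that determines the ``recently-first-infected'' set $R$ already looks at child-edge clocks (those edges are parent edges for the next generation, and $R$ may well contain such children), so the Bernoullis are not manifestly independent of $R$.

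The paper sidesteps the comparison entirely. It uses the \emph{deterministic} Cheeger-type fact from Proposition~\ref{prop_cp_mean} that a proportion at least $1-\tfrac{2}{d}$ of any finite set in the tree has an incident free branch; applied to $\eta_t$ this yields $\Omega(|\eta_t|)$ boundary vertices without ever comparing $|\eta_{t-s}|$ to a history. A pigeonhole against the history bound $|\cup_{s\le t}\eta_s|\le e^{c_\lambda t + t^{\delta}}$ then locates, for most of these vertices, a short path of length $L=O(t^{\delta})$ through the free branch to a genuinely unvisited site, and infection is pushed along these paths in time $L$ at cost $q^{L}=e^{-O(t^{\delta})}$. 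The crucial difference is that the paper allows an $O(t^{\delta})$ window rather than a constant one, and pays for it with a structural fact that has no probabilistic slack.
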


\begin{proof} 
By Corollary \ref{cor_union}, the contact process $\xi_t$ (which stochastically contains the severed contact process) has
\begin{equation}\label{eq_union}
\mathbb{E}\Bigg( \bigg|
\bigcup_{s\leq t} \xi_s
\bigg| \Bigg)
=O\left(
e^{c_\lambda t}
\right)
\end{equation}
and so by Markov's inequality, and by the previous proof for the lower bound,
\begin{equation}\label{eq_growth_condition}
|\eta_t| \geq e^{c_\lambda t - t^{\delta}}
\quad \text{and} \quad
\bigg|
\bigcup_{s\leq t} \xi_s
\bigg|
\leq e^{c_\lambda t + t^{\delta} }
\end{equation}
with probability $1-o(e^{-t^{\nicefrac{\delta}{4}}})$. In Proposition \ref{prop_cp_mean} we found a proportion $1-\nicefrac{2}{d}$ of an arbitrary set on the $d$-regular tree has an empty incident branch. Hence we have a set $S \subset \partial \eta_t$ with
\[
|S|\geq \frac{1}{3}e^{c_\lambda t -  t^{\delta}}
\]
such that every vertex in $S$ has an $\eta_t$-free incident branch. These free branches are disjoint sets, consider how many of them have at least $4 e^{2 t^{\delta}}$ vertices in the union: this cannot be more than
\[
\frac{e^{c_\lambda t + t^{\delta} }}{4e^{2 t^{\epsilon+E_d}}}
\leq
\frac{1}{4}
e^{c_\lambda t - t^{\delta}}.
\]

In this way we have found a subset of $\eta_t$ of size $\tfrac{1}{12}
e^{c_\lambda t - t^{\delta}}$, each having an incident $\eta_t$-free branch with fewer than  $4 e^{2 t^{\delta}}$ vertices in $\bigcup_{s\leq t} \eta_s$.
Each of these vertices has a path of length at most
\[
L=1+\left\lfloor\log_{d-1}\left( 4 e^{2  t^{\delta}} \right)\right\rfloor
\leq
3  t^{\delta}
\]
leading through an incident $\eta_t$-free branch to the boundary of $\bigcup_{s\leq t} \eta_s$. With a simple binomial lower bound on how many of these paths communicate the infection at linear speed, as in \cite[Proposition 2.6]{lalleysu}, we find some $q \in (0,1)$ such that at time $t+L$ at least
\[
\tfrac{1}{5}e^{c_\lambda t - t^{\delta}} q^L
=
e^{c_\lambda t -  \Theta(t^{\delta})}
\]
of these paths produce an element of
\[\eta_{t+L} \cap \Big(\partial \bigcup_{s\leq t+L} \eta_s\Big)=\pion \eta_{t+L},\]
with error probability doubly exponential in $t$ after the conditioning \eqref{eq_growth_condition} (and so insignificant compared to the $o(e^{-t^{\nicefrac{\delta}{4}}})$). By noting that $t+L\sim t$ and rescaling $t^{\delta}$ to absorb a constant factor, we have the claimed lower bound on pioneers.
\end{proof}

Actually, we also need to make an almost trivial change to the conditioning for the following bound.

\begin{proof}[Proof of Proposition \ref{cor_contact_growth_lower_bound}]
The contact process on $\T$ stochastically dominates the severed contact process, and so the event above is contained in the event $\{\log | \pion \eta_t | \leq c_\lambda t - 
t^{\delta}\}$ of the previous corollary. The only difference in the proofs is that we have a weaker conditioning here, and so if our lower bounding severed process dies out we may have to move to one of the other $d-1$ severed trees that make up $\T$ to find a surviving infection to restart.

Eventually though with the same proof we find some subtree containing the same $\pion \eta_t$ context. 
\end{proof}

Finally in this section, we control the separation of two independent contact processes on the same tree. Later, this will allow us to explore the same tree twice as if mostly independent.

%\begin{proposition}\label{prop_intersection}
%When $\lambda<\lambda_2$, two i.i.d. contact processes $\xi_t^{(1)}$ and $\xi_t^{(2)}$ have
%\[
%\e\left(
%|\xi_t^{(1)}\cap\xi_t^{(2)}|
%\right)
%\leq 
%4 e^{\tfrac{c_\lambda}{2} t}
%.
%\]
%\end{proposition}

\begin{proposition}\label{prop_union_intersection}
When $\lambda_1<\lambda<\lambda_2$, two i.i.d. contact processes $\xi_t^{(1)}$ and $\xi_t^{(2)}$ have
\[
\e\left(
\Big|\bigcup_{s\leq t}\xi_s^{(1)}\cap\bigcup_{s\leq t}\xi_s^{(2)}\Big|
\right)
\leq 
2 \left(2+\frac{1}{c_\lambda}\right)
e^{\tfrac{c_\lambda}{2} t}.
\]
\end{proposition}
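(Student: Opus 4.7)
The plan is to combine independence of the two processes with a time–split of each history at the midpoint $t/2$, bounding the main contributions by Corollary~\ref{cor_union} and controlling the residual ``late–late'' piece via the weak-phase characterization.

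First I would unfold the expected intersection using independence: by Fubini,
\[
\e\Big|\textstyle\bigcup_{s\leq t}\xi^{(1)}_s \cap \bigcup_{s\leq t}\xi^{(2)}_s\Big|
= \sum_{v \in \T}\p\bigl(v \in A_t\bigr)^{2},
\qquad A_t := \textstyle\bigcup_{s\leq t}\xi_s.
\]
Because both contributions to each summand are the same quantity $\p(v \in A_t)$, it suffices to bound this quadratic sum.

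Second, split each individual history at time $t/2$ by writing $A^{(i)}_t = A^{(i)}_{t/2} \cup (A^{(i)}_t \setminus A^{(i)}_{t/2})$ and expanding the intersection:
\[
A^{(1)}_t \cap A^{(2)}_t
\ \subseteq\ A^{(1)}_{t/2} \ \cup\ A^{(2)}_{t/2}
\ \cup\ \bigl( (A^{(1)}_t\setminus A^{(1)}_{t/2}) \cap (A^{(2)}_t\setminus A^{(2)}_{t/2}) \bigr).
\]
The first two pieces each have expectation at most $\e|A_{t/2}|$, which Corollary~\ref{cor_union} controls by $\tfrac{d}{d-2}(1+1/c_\lambda)\,e^{c_\lambda t/2}$; together they account for the $e^{c_\lambda t/2}$-order main term of the bound, with constants collected into $2(2+1/c_\lambda)$.

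Third, the residual ``late–late'' set is where the weak-phase assumption $\lambda < \lambda_2$ enters. By the Markov property applied to each process at time $t/2$, and the independence of $\xi^{(1)}$ from $\xi^{(2)}$, conditionally on $\bigl(\xi^{(1)}_{t/2},\xi^{(2)}_{t/2}\bigr)$ this residual set is distributed as the intersection of the union histories of two independent contact processes of duration $t/2$ started from $\xi^{(1)}_{t/2}$ and $\xi^{(2)}_{t/2}$. The classical Pemantle characterisation of the weak phase yields a geometric decay $\p(v \in A_\infty^{u}) \leq C\rho^{d(u,v)}$ with $\rho(\lambda) < 1/(d-1) < 1/\sqrt{d-1}$ for $d\geq 3$, which makes the double sum over the possible ``meeting points'' convergent. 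Combining this with the expected sizes of $\xi^{(i)}_{t/2}$ from Proposition~\ref{prop_cp_mean} and the vertex-transitivity of $\T$ shows the late–late expectation is of constant order and is thus absorbed into the $e^{c_\lambda t/2}$ main term.

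The main obstacle I anticipate is in Step~3: verifying that the stochastic domination of the late–late intersection by two Markov-restarted contact processes, combined with the exponential decay $\rho < 1/\sqrt{d-1}$ of the ever-infected probability, produces the exact constants $2(2+1/c_\lambda)$ appearing in the statement rather than a larger ad-hoc constant. This requires pairing the Corollary~\ref{cor_union} bounds on the two ``early'' sets with a sufficiently tight control of the branching weight carried into the second half by $|\xi^{(i)}_{t/2}|$.
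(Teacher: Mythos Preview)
Your route differs substantially from the paper's, and Step~3 has a real gap. The paper never time-splits: after your same first step, writing the expectation as $1+\sum_{n\ge1}d(d-1)^{n-1}p_n^{\,2}$ with $p_n=\p(e_n\in A_t)$, it applies Cauchy--Schwarz in the form
\[
\Bigl(1+\sum_{n\ge1} a_n p_n^{\,2}\Bigr)^2\le\Bigl(1+\sum_{n\ge1} a_n p_n\Bigr)\Bigl(1+\sum_{n\ge1} a_n p_n^{\,3}\Bigr),\qquad a_n=d(d-1)^{n-1}.
\]
The first factor is $\e|A_t|$, controlled by Corollary~\ref{cor_union}; for the second, the weak-phase bound $p_n\le(d-1)^{-n/2}$ from \cite[(4.49), Thm.~4.65]{liggett1999stochastic} makes $\sum_n a_n p_n^{\,3}$ a convergent geometric series with an explicit, $\lambda$-independent value. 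Taking the square root produces the stated $2(2+1/c_\lambda)e^{c_\lambda t/2}$ directly.

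In your Step~3 the decay rate is misquoted: for $\lambda$ just below $\lambda_2$ one has $\beta(\lambda)$ close to $(d-1)^{-1/2}$, not below $1/(d-1)$. More importantly, the Markov restart introduces two random initial sets of expected size $\asymp e^{c_\lambda t/2}$, and your sketch does not explain how these factors vanish; a union bound over starting pairs $(u_1,u_2)\in\xi^{(1)}_{t/2}\times\xi^{(2)}_{t/2}$ produces a factor $e^{c_\lambda t}$, not $O(1)$. To salvage it you would also need a pointwise decay for $\p(u\in\xi_{t/2})$ and a careful triple sum over $(u_1,u_2,v)$, all of which requires the \emph{strict} inequality $\beta(\lambda)<(d-1)^{-1/2}$. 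But observe that if that argument actually worked it would apply verbatim to $\sum_v\p(v\in A_\infty)^2$ and show the full expectation is $O_\lambda(1)$ --- so the time-split is superfluous, and the resulting constant is $\lambda$-dependent and diverges as $\lambda\uparrow\lambda_2$, hence cannot give the stated bound. Indeed even your early terms alone, $2\e|A_{t/2}|\le 2\tfrac{d}{d-2}(1+\tfrac1{c_\lambda})e^{c_\lambda t/2}$, already overshoot $2(2+\tfrac1{c_\lambda})e^{c_\lambda t/2}$ at $d=3$.
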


\begin{proof}
If we write $e_n$ to denote a vertex at distance $n$ from the root $o$, one contact process has expectation
\[
\e\left(
\Big|\bigcup_{s\leq t}\xi_s^{(1)}\Big|
\right)=
1+\sum_{n=1}^\infty d(d-1)^{n-1}
 \p\left(
e_n \in \bigcup_{s\leq t}\xi_s^{(1)}
\right)
\]
so of course two independent processes have expectation
\[
\begin{split}
\e\left(
\Big|\bigcup_{s\leq t}\xi_s^{(1)}\cap\bigcup_{s\leq t}\xi_s^{(2)}\Big|
\right)&=1+\sum_{n=1}^\infty d(d-1)^{n-1}
 \p\left(
e_n \in \bigcup_{s\leq t}\xi_s^{(1)}\cap\bigcup_{s\leq t}\xi_s^{(2)}
\right)\\
&=1+\sum_{n=1}^\infty d(d-1)^{n-1}
 \p\left(
e_n \in \bigcup_{s\leq t}\xi_s^{(1)}
\right)^2.
\end{split}
\]

Hence by Cauchy--Schwarz
\[
\begin{split}
\e\left(
\Big|\bigcup_{s\leq t}\xi_s^{(1)}\cap\bigcup_{s\leq t}\xi_s^{(2)}\Big|
\right)^2
&\leq
\e\left(
\Big|\bigcup_{s\leq t}\xi_s^{(1)}\Big|
\right)
\left(1+
\sum_{n=1}^\infty d(d-1)^{n-1}
 \p\left(
e_n \in \xi_t^{(1)}
\right)^3
\right)\\
&\leq
\e\left(
\Big|\bigcup_{s\leq t}\xi_s^{(1)}\Big|
\right)
\left(1+
\frac{d}{(d-1)(\sqrt{d-1}-1)}
\right)
\end{split}
\]
the last line using from \cite[Equation 4.49 and Theorem 4.65]{liggett1999stochastic} that ${\p\left(
e_n \in \bigcup_{s\leq t}\xi_s^{(1)}
\right)
\leq (d-1)^{-\nicefrac{n}{2}}}$ whenever $\lambda<\lambda_2$. Finally we recall Corollary \ref{cor_union} for the conclusion
\[
\begin{split}
\e\left(
\Big|\bigcup_{s\leq t}\xi_s^{(1)}\cap\bigcup_{s\leq t}\xi_s^{(2)}\Big|
\right)
&\leq 
\sqrt{\left(1+
\frac{d}{(d-1)(\sqrt{d-1}-1)}
\right) \cdot\e\left(
\Big|\bigcup_{s\leq t}\xi_s^{(1)}\Big|
\right)}\\
&\leq
\sqrt{\left(1+
\frac{d}{(d-1)(\sqrt{d-1}-1)}
\right) 
\frac{d}{d-2}
\left(
\frac{1}{c_\lambda}+1
\right)
e^{c_\lambda t}
}\\
&\leq
4
e^{\tfrac{c_\lambda}{2} t} \sqrt{1+\frac{1}{c_\lambda}}
\leq
2 \left(2+\frac{1}{c_\lambda}\right)
e^{\tfrac{c_\lambda}{2} t}
\end{split}
\]
by in the final line bounding the factor at $d=3$ where it is largest.
\end{proof}

%\clearpage

\section{Reinfection on the configuration model}% with \texorpdfstring{$\lambda<\lambda_2(\mathbb{T}_d)$}{h<h2}}

Our real interest is not in the regular tree but in regular random graphs, and in this section we control upper and lower bounds on the reinfection times.

\subsection{Exploring the configuration on the regular tree}\label{sec_exploring}

In this section we describe how the tree $\T$ is labelled as it is infected, which couples the infection on $\T$ to the infection on the regular configuration model. Precisely, we keep track of a subset of the \emph{truly infected} vertices in $\tilde{\xi}$, on which the random function $\ell$ is bijective to the infection on $[N]$. This construction is similar to that of \cite[Section 3]{lalleysu}, but considerably simplified. It is also similar to that of \cite{valesin16}, a work which also builds the infection on a graph from the infection on the universal cover of that graph---they elect to prevent extra infection in the cover rather than label it as false, but the difference is not significant.

\begin{definition}[$\ell$]\label{def_ell}
$\T$ has one vertex labelled $o$ and called the root, from which we run a contact process with $\tilde{\xi}_0=\{o\}$. We have initial labelling
\[
\ell(o)=1
\]
and no other vertex is labelled. Moreover we keep track of which vertices in $\tilde{\xi}$ are \emph{truly infected}, and $o$ is truly infected at this time.

The map $\ell:\T \rightarrow [N]$ is increasingly determined by the random graph, which we explore with the infection through its history. The graph $\sG$ is encoded by $\nicefrac{dN}{2}$ pairs of half-edges, $d$ from each vertex, which we discover as \emph{the configuration}. 

Every time a vertex in the true infection infects a vertex with no label in $\ell$, a free half-edge from the source vertex is paired with a uniformly selected other free half-edge and this pair is added to the configuration. The associated vertex of the second half-edge is then fixed as the label of the target under $\ell$, and the target is truly infected.

Whenever a vertex in $\T$ is newly labelled by $\ell$ and infected, its neighbourhood is immediately given all other neighbours already \emph{determined} in the configuration but not yet reflected in $\ell$ on its neighbourhood, in uniform random positions.

When a vertex $v$ with a label $\ell(v)=i$ is infected by some $w\in\tilde{\xi}$, we have the following rules:
\begin{itemize}
\item if $w$ was truly infected:
\begin{itemize}
\item if there is another truly infected vertex with label $i$, the target $v$ is now falsely infected;
\item otherwise the target becomes truly infected (even if it was falsely infected before);
\end{itemize}
\item if $w$ was falsely infected:
\begin{itemize}
\item if $v$ is already truly infected then it continues to be;
\item otherwise $v$ becomes falsely infected.
\end{itemize}
\end{itemize}

We can write $\ell^{-1}(\{\text{truly infected vertices}\})$ for the infection on $\sG$. The point of this construction is to allow the contact process on $\T$ to run unmodified, while enforcing on a subset that there is at most one ``true'' position of any label in $[N]$ on the tree at any time, so that we do not double-count the rate of infection or recovery at any vertex to break the coupling.
\end{definition}

Once we have determined the full configuration we can if desired complete the function $\ell$ on all of $\T$, and this labelling gives the covering map between the finite graph $\sG$ and its \emph{universal cover} $\T$. This is the unique simply connected covering space \cite[Chapter 1]{hatcher2002algebraic} and it covers every other connected cover of the space. For finite graphs it must be a tree, for non-tree graphs it must be an infinite tree, and for regular configuration models it must deterministically be the regular tree.

Note also that we didn't enforce that the explored configuration model is a simple graph: it can have both loops and multiple edges. This produces the stronger form of our high probability main theorem, as the multigraph configuration produces the simple configuration model on an event with asymptotically positive probability (asymptotic to $e^{\nicefrac{(1-d^2)}{4}}$).

\begin{remark}\label{rem_synchronicity}
The power of this construction is that we can arbitrarily reduce the truly infected set to still find a path in the space-time percolation structure of the contact process on $\sG$. In this way, we can run the evolution for different parts of the true infection without synchronicity: for example in Section \ref{sec_path} we will explore small time severed contact processes from a set of truly infected pioneers. Each pioneer can produce a truly infected set while its labels are unique, and if a later evolution interferes with that uniqueness, we just say the later pioneer is no longer truly infected.
\end{remark}

We comment also that the construction is slightly strange at multigraph features, but it does not create issues. While $k$ copies of a healthy neighbour are all healthy, one of them becomes truly infected at rate $k\lambda$. After one of them is truly infected, any infections to the other copies will not be true infection and so can be ignored. Similarly, a loop at a truly infected vertex can lead to it infecting copies of itself but these will also not be true infection.

In the remainder of this work, we use this construction to produce the upper and lower bounds implicit in the cases of Theorem \ref{thm_main}. Note that an upper bound on the reinfection times requires a lower bound on the infection, and vice versa.

\subsection{Lower Bound}

First we remark for the \eqref{case_strong} lower bound that we recover at most at Poisson rate $1$ and so trivially $I_k=\Omega_{\mathbb{P}}(k)$. The other lower bound is not totally trivial but is not much more difficult.

%\begin{proposition}
%For $k\rightarrow \infty$ and $\lambda_1(d)<\lambda<\lambda_2(d)$ we have
%\[
%I_k=
%\frac{1}{c_\lambda}\log N +\Omega_{\mathbb{P}}(k).
%\]
%\end{proposition}

\begin{proof}[Proof of \eqref{case_weak} lower bound]
Suppose $k\rightarrow \infty$ and $\lambda_1(d)<\lambda<\lambda_2(d)$. 
In the infection on the tree, the root is reinfected fewer than $\tfrac{k}{3}$ times with high probability because we are in the weak survival phase.

Note that these labellings are not independent, but every other vertex in the tree has a probability $\nicefrac{1}{N}$ to be mapped to the root in the configuration model, except the children of the root where this probability is \[\frac{d-1}{dN-1}\leq \frac{1}{N}. \]

So we can control the first moment: at time $t=\tfrac{1}{c_\lambda}\log N-\tfrac{k}{3}$, recalling Corollary \ref{cor_union}, we expect to infect other occurrences of the root for a total time at most
\[
\frac{1}{N}
\e\left(
\Big|\bigcup_{s\leq t}\xi_s
\setminus \{o\}
\Big|
\right)
=o\left(
1
\right)
\]
and so with high probability we never infect it elsewhere in the tree. Moving then to the infection on the finite graph, the remaining $\tfrac{2k}{3}$ infections will need a further time at least $\tfrac{k}{2}$ by the concentration of the rate $1$ Poisson process of recoveries at the root, and so with high probability 
\[
I_k \geq \frac{1}{c_\lambda}\log N +\tfrac{k}{6}=\frac{1}{c_\lambda}\log N +\Omega_{\mathbb{P}}(k)
\]
as claimed for $k=\left\lfloor\log^{\epsilon} N\right\rfloor$.
\end{proof}

\subsection{Upper Bound}

For the main construction of this section we will put a large set of contact processes as different \emph{types} on the same graphical construction---note that infections sharing a vertex will have totally coalesced until they recover at the same time. First, we control that linearly many types survive.

\begin{lemma}\label{lem_surviving_types}
For each $i \in [k]$ we have type $k$ contact process
\[\left(\xi_t^{(i)}\right)_{t\geq i} \text{\quad with initial condition } \xi_i^{(i)}=\{o\}.
\]

Count the number of surviving types as
\[
S=\sum_{i=1}^k \mathbbm{1}_{\xi^{(i)}\text{ survives}},
\]
then for any small $\epsilon>0$ we find $S=k p_\lambda +o_{\mathbb{P}}(k)$.
%\[
%\p\left(S\leq (1-\epsilon) p_\lambda k\right)
%=O\left(
%\frac{1}{k}
%\right).
%\]
\end{lemma}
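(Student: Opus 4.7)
The plan is to apply Chebyshev's inequality to $S$ after computing its mean and bounding its variance.

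\emph{Mean.} Each $\xi^{(i)}$ is, in isolation, a standard contact process on $\T$ started from $\{o\}$ at time $i$. Since $\lambda > \lambda_1(d)$, the survival probability $p_\lambda:=\mathbb{P}(\xi^{(i)}\text{ survives})$ is strictly positive by Theorem~\ref{thm_lambda_1}, and linearity of expectation immediately gives $\mathbb{E}[S] = k p_\lambda$.

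\emph{Variance via decorrelation.} Decompose $\mathrm{Var}(S) = \sum_i \mathrm{Var}(\mathbbm{1}_{\xi^{(i)}\text{ survives}}) + \sum_{i \neq j} \mathrm{Cov}(\mathbbm{1}_{\xi^{(i)}\text{ survives}}, \mathbbm{1}_{\xi^{(j)}\text{ survives}})$. The diagonal contributes $O(k)$, so it suffices to show the covariance of $\mathbbm{1}_{\xi^{(i)}\text{ survives}}$ and $\mathbbm{1}_{\xi^{(j)}\text{ survives}}$ tends to $0$ as $|j-i|\to\infty$; Ces\`aro averaging then makes the off-diagonal sum $o(k^2)$ and Chebyshev completes the proof.

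\emph{Asymptotic independence of survival.} Fix $i<j$ with $\Delta=j-i$. The only dependence between the two types comes from shared Poisson clocks at vertices simultaneously inhabited by both. I would condition on the state $\xi_j^{(i)}$ at the moment type $j$ is introduced. On $\{\xi_j^{(i)}=\emptyset\}$ the joint survival event has already failed. Otherwise, Proposition~\ref{cor_contact_growth_lower_bound} guarantees that with conditional probability $1-o(1)$ the surviving type $i$ has grown to size at least $e^{c_\lambda \Delta-\Delta^\delta}$, and a speed-of-light bound (infection jumps travel at rate at most $\lambda d$) localises its support inside a ball of radius $O(\Delta)$ around $o$. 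Type $j$'s evolution from $\{o\}$ at time $j$ can then be coupled with an independent standard contact process started at $o$, the coupling failing only at vertices in $\bigcup_{s}\xi_{j+s}^{(i)}\cap \bigcup_{s}\xi_{j+s}^{(j)}$; Proposition~\ref{prop_union_intersection} bounds the expected size of this overlap by $O(e^{c_\lambda t/2})$, vanishing against the typical $\Theta(e^{c_\lambda t})$ growth of either process. Hence $\mathbb{P}(\xi^{(j)}\text{ survives}\mid \xi_j^{(i)}\neq\emptyset)\to p_\lambda$ as $\Delta\to\infty$, and $\mathbb{P}(\xi^{(i)},\xi^{(j)}\text{ both survive})\to p_\lambda^{2}$.

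\emph{Main obstacle.} Two subtleties will need care. First, the shared initial vertex $o$ immediately couples every pair of types at their later birth time, producing strong short-range positive correlation through coalescence at and near the root; the decay in $\Delta$ must be robust enough to survive averaging, even though small-$\Delta$ pairs contribute non-trivially. Second, Proposition~\ref{prop_union_intersection} requires $\lambda<\lambda_2(d)$; in the strong survival phase the intersection of two histories is no longer negligible, so one must either restrict the lemma's use to the weak phase or provide an alternative argument there—for instance, invoking Corollary~\ref{cor_pion_growth_lower_bound} to exhibit enough independent pioneers that branching-process domination yields asymptotic independence directly, without relying on small overlaps.
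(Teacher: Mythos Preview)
Your overall architecture---compute $\e[S]=kp_\lambda$, show $\var(S)=o(k^2)$, apply Chebyshev---is exactly the paper's. But your decorrelation step is far more elaborate than needed and, as you yourself flag, runs into a phase restriction that should not be there.

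The paper dispatches the covariance bound in one line using only the \emph{temporal} independence of the graphical construction. For $i<j$,
\[
\{\xi^{(i)}\text{ survives}\}\cap\{\xi^{(j)}\text{ survives}\}
\ \subset\
\{\xi^{(i)}_j\neq\emptyset\}\cap\{\xi^{(j)}\text{ survives}\},
\]
and the two events on the right are measurable with respect to the Poisson marks on the disjoint time intervals $[i,j]$ and $[j,\infty)$ respectively. Hence
\[
\p(\xi^{(i)},\xi^{(j)}\text{ both survive})
\le \p(\xi_{j-i}\neq\emptyset)\cdot p_\lambda
\longrightarrow p_\lambda^2
\quad\text{as }j-i\to\infty.
\]
This one-sided bound already gives $\e(S^2)\le k^2p_\lambda^2(1+o(1))$, so $\var(S)=o(k^2)$. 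No spatial overlap control, no growth lower bounds, no coupling to an independent copy, and no restriction to $\lambda<\lambda_2(d)$ are required.

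Both of your ``obstacles'' are therefore artefacts of the route you chose. The short-range positive correlation near the root is irrelevant once you only ask for an upper bound on the joint survival probability and let $j-i\to\infty$. And Proposition~\ref{prop_union_intersection} is simply not needed here, so the lemma holds uniformly for all $\lambda>\lambda_1(d)$. Your proposed coupling argument---matching $\xi^{(j)}$ on the shared graphical construction to an independent copy by controlling the spatial overlap with $\xi^{(i)}$---is also not obviously correct as stated: two processes on the same Harris construction can be correlated through shared edge clocks even when their supports are vertex-disjoint, so ``the coupling fails only on the intersection'' would need more justification.
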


\begin{proof}
 These contact processes exist on the same graphical construction, but of course times before and after time $j$ operate independently. Since $\mathbb{P}(\xi_t \neq \mathbf{0})\rightarrow p_\lambda$, as $j-i\rightarrow\infty$
 
%For some $1\leq i<j \leq k$, by duality we have
%\[
%\p \left( 
%\text{$\xi^{(i)}$ and $\xi^{(j)}$ never intersect}
% \right)
% = 
%\p \left( 
%\text{$\xi_t^{(i)}(o)=0$ for all $t \geq j$}
% \right).
%\]
%
%By \cite[Proposition 4.27(a)]{liggett1999stochastic} we have
%\[
%\E\left(
%\int_j^\infty \xi_t^{(i)}(o) {\rm d} t
%\right)
%=e^{-\Omega\left(
%j-i
%\right)}
%\]
%and if the root is ever infected at some $t \geq j$ we expect conditionally at least an infection period of length $1$. Hence, the probability that this reinfection happens must also be $e^{-\Omega\left(
%j-i
%\right)}$. By coupling to two independent contact processes we deduce

\[
\p \left( 
\text{$\xi^{(i)}$ and $\xi^{(j)}$ both survive}
 \right)
 \leq
 \p \left( 
\text{$\xi^{(i)}$ survives to time $j$}
 \right)
 \p \left( 
\text{$\xi^{(j)}$ survives}
 \right)
\rightarrow
 p_\lambda^2.
\]

The number of surviving types $S$ has 
$
\e(S)=k p_\lambda
$, and
\[
\e(S^2)\leq
k p_\lambda
+2\sum_{1\leq i<j \leq k} \p \left( 
\text{$\xi^{(i)}$ and $\xi^{(j)}$ both survive}
 \right)
=
k^2 p_\lambda^2
\left(1+o(1)\right)
\]
so that $\mathbb{V}{\rm ar}(S)=o(k^2)$. We conclude by Chebyshev's inequality.
\end{proof}

The construction of Section \ref{sec_exploring} will only see true infection for an initial phase of the contact process exploring $\sG$. In the following lemma we control how long this phase is.

\begin{lemma}\label{lem_blue_tree}
For any $\delta\in(0,1)$ a contact process exploring $\T$ can be exactly coupled to a contact process on the network for time \[t=\frac{1}{2 c_\lambda}\log N
- \log^\delta N\] with probability $1-\nicefrac{1}{\log N}$.
\end{lemma}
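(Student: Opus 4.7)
The plan is to reduce the claim to a birthday-style estimate on the number of label collisions in the configuration exploration of Definition~\ref{def_ell}. The coupling between the tree infection $\xi$ (restricted to its truly infected part) and the contact process on $\sG$ is exact precisely when no label collision is encountered in the explored region, i.e.\ when the random function $\ell$ is injective on the set $U_t := \bigcup_{s\leq t}\xi_s$ (and its immediate boundary, which is only a factor $d$ larger). Writing $C_t$ for the number of pairs of distinct vertices in $U_t$ assigned the same $[N]$-label, the goal reduces to showing $\e(C_t) = o(\nicefrac{1}{\log N})$ and invoking Markov's inequality.

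First I would bound $\e(U_t^2)$. By the recovery decomposition of Corollary~\ref{cor_union}, $U_t \leq |\xi_t| + R_t$, where $R_t$ counts recovery events in $[0,t]$ and satisfies $\e(R_t \mid (\xi_s)_{s\leq t}) = \int_0^t |\xi_s|\,\de s$. Hence
\[
\e(U_t^2) \leq 2\,\e\big(|\xi_t|^2\big) + 2\,\e\!\left( R_t + \Big(\int_0^t |\xi_s|\,\de s\Big)^{\!2}\right)
\leq 2\,\e(|\xi_t|^2) + 2t\int_0^t \e(|\xi_s|^2)\,\de s + O\big(e^{c_\lambda t}\big),
\]
and Proposition~\ref{prop_cp_moments} (with $n=2$) then gives
\[
\e(U_t^2) = O\!\left( t^{\,1 + 2\log_2\tfrac{d}{d-2}}\,e^{2c_\lambda t}\right).
\]

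Second, I would bound the per-pair collision probability in the half-edge pairing. Two distinct tree vertices $u,v$ receive the same element of $[N]$ only when, during the reveal, one of the at most $d$ half-edges from $u$ is paired to a half-edge at the same $[N]$-vertex as one of the at most $d$ half-edges from $v$. Conditioning on the revealed configuration up to the moment a fresh label is assigned, the next target half-edge is uniform over the $dN - O(|U_t|)$ still-free half-edges. Provided $|U_t| = o(N)$, this denominator is $(1+o(1))dN$, so the conditional probability that the new label coincides with any one of the at most $d|U_t|$ already-used half-edges of currently labelled tree vertices is $O(|U_t|/N)$. Summing the pair-indicators and taking expectations yields $\e(C_t) = O\!\left(\e(U_t^2)/N\right)$.

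Finally, substituting $t = \tfrac{1}{2c_\lambda}\log N - \log^{\delta} N$ gives $e^{2c_\lambda t}/N = e^{-2c_\lambda \log^\delta N}$, so
\[
\e(C_t) = O\!\left((\log N)^{\,1+2\log_2\tfrac{d}{d-2}}\,e^{-2c_\lambda \log^\delta N}\right) = o\!\left(\tfrac{1}{\log N}\right),
\]
since $\log^\delta N$ grows faster than any multiple of $\log\log N$ for $\delta \in (0,1)$. On the complementary event $|U_t| = o(N)$ (which itself holds with probability tending to $1$ by Markov against $\e(U_t)$), we conclude that with probability $1 - o(\nicefrac{1}{\log N}) \geq 1 - \nicefrac{1}{\log N}$ no label collision occurs and the coupling is exact up to time $t$.

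The main obstacle is the second step: justifying the $O(|U_t|/N)$ per-label bound uniformly over the exploration history, because the half-edge pairing has genuine dependencies and the ``truly infected'' relabelling rules of Definition~\ref{def_ell} are subtle (a label may briefly leave, then re-enter, the truly infected set). I would handle this by exposing the configuration half-edge by half-edge in chronological order of the contact process clocks, so that at each reveal the remaining half-edges are exchangeable given the past; the $d|U_t|$ forbidden targets then form a deterministic subset of the currently free half-edges, giving the required conditional bound without further combinatorial bookkeeping.
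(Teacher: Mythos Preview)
Your proposal is correct and follows essentially the same route as the paper: reduce to a birthday bound $\p(\text{clash})=O(\e(U_t^2)/N)$ for $U_t=\bigl|\bigcup_{s\le t}\xi_s\bigr|$, bound $\e(U_t^2)$ via the $n=2$ case of Proposition~\ref{prop_cp_moments}, and then substitute the chosen $t$. The only cosmetic difference is in how $\e(U_t^2)$ is controlled: the paper uses the stochastic domination $\operatorname{Bin}(U_t,1-\nicefrac{1}{e})\preceq\int_0^{t+1}|\xi_s|\,\de s$ and then bounds the squared integral by $(t+1)^2\e(|\xi_{t+1}|^2)$, whereas you decompose $U_t\le |\xi_t|+R_t$ with $R_t$ conditionally Poisson; your version in fact saves a factor of $t$ in the polynomial prefactor, though this is irrelevant once the stretched exponential $e^{-2c_\lambda\log^\delta N}$ takes over.
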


\begin{proof}
When $\left|\cup_{s\leq t} \, \tilde{\xi}_t \right|=i-1$ and we have seen no false infection yet, we must have used $2i-4$ half edges of the configuration. As there were no repeated labels so far, the next explored half-edge is a clash with exactly probability
\[
\frac{d+(d-1)(i-1)-1}{dN-2i+4-1}
\]
where we subtracted $1$ from numerator and denominator because the half-edge cannot pair with itself.

Iteratively we saw no clashes in $1\ll k \leq \sqrt{N}$ vertices with probability
\[
\prod_{i=2}^k
\left(
1-\frac{(d-1)(i-2)}{dN-2i+3}
\right)
\geq
\prod_{i=2}^k
\left(
1-\frac{i}{N}
\right)
\geq
1-\frac{k^2}{N}.
\]

To use this, we need the second moment of the union. Note because each vertex recovers independently at rate $1$
\[
{\rm Bin}\left(
\Bigg|\bigcup_{s\leq t}\xi_s \Bigg|
,
1-\frac{1}{e}
\right)
\preceq
\int_0^{t+1}
\xi_s
{\rm d}s
\]
which implies by taking second moments
\[
\frac{1}{e}\left(1-\frac{1}{e}\right)
\e\left(
\Bigg|\bigcup_{s\leq t}\xi_s \Bigg|
\right)
+
\left(1-\frac{1}{e}\right)^2
\e\left(
\Bigg|\bigcup_{s\leq t}\xi_s \Bigg|^2
\right)
\leq
\e\left(
\int_0^{t+1}
\int_0^{t+1}
\xi_s
\xi_u
{\rm d}s
{\rm d}u
\right).
\]

For the right-hand side
\[
\e\left(
\int_0^{t+1}
\int_0^{t+1}
\xi_s
\xi_u
{\rm d}s
{\rm d}u
\right)
\leq
(t+1)^2
\e\left(
\xi_{t+1}^2
\right)
=
O \left(
t^{2+2\log_2 
\tfrac{d }{d-2}
}
\,
e^{ 2 c_\lambda t }
\right)
\]
and so we conclude
\[
\p\left(
\text{clash by time }t
\right)\leq
\frac{O \left(
(t+1)^{2+2\log_2 
\tfrac{d }{d-2}
}
\,
e^{ 2 c_\lambda (t+1) }
\right)}{\left(1-\frac{1}{e}\right)^2 N}
=
O \left(
\frac{
t^{2+2\log_2 
\tfrac{d }{d-2}
}
\,
e^{ 2 c_\lambda t }
}{N}\right).
\]

Thus at time $t=\frac{1}{2 c_\lambda}\log N
- \log^\delta N$
\[
\p\left(
\text{clash by time }t
\right)\leq
O \left(
\left(\frac{1}{2 c_\lambda}\log N\right)^{2+2\log_2 
\tfrac{d }{d-2}
}
\,
e^{ -2 c_\lambda \log^\delta N }
\right)
\leq
\frac{1}{\log N}
\]
because a stretched exponential is larger than a polynomial.
\end{proof}

Note at this point that the previous lemma trivialises the $O_{\p}(k)$ upper bound of Theorem \ref{thm_main}.

\begin{proof}[Proof of \eqref{case_strong} upper bound]
For parameters $\lambda>\lambda_2$, in Theorem \ref{thm_pemantle_transition} we see that these reinfections are certain.
Conditioning on survival is the same as conditioning on infinitely many reinfections, and in $\T$ there is no $N$, so they must occur at linear speed. We conclude
\[
I_k=O_{\p}(k)
\]
for any $k=o(\log N)$ by Lemma \ref{lem_blue_tree}.
\end{proof}

\subsubsection{Finding the infection path}\label{sec_path}

\begin{remark}
In this section we have the following essential proof strategy. 

\begin{itemize}[leftmargin=*]
\item By duality we reverse time of the second half of the reinfection journey, and so: 
infect the root $1 \in V(\sG)$ with type $i \in [k]$ at time $i$, up to some final time $k \in \mathbb{N}$. These infections exist on the same graphical construction and so will coalesce.
\item On the same tree but on an independent graphical construction, we run the first part of the infection history which is a single type. This stops at some time $t_1 \approx t_2$. While we still have $o(\sqrt{N})$ vertices these trees will only see true infection, but we must go a little beyond that.
\item Most surviving reversed infections will intersect with the one forwards infection that has been conditioned to survive, and thus we have found reinfections between times $t_1+t_2-k$ and $t_1+t_2$.
%\begin{itemize}
%\item[$\hookrightarrow$] More precisely, we just find that for one particular surviving type $i \in [k]$ which is conditioned to survive, the severed processes from pioneers in $\xi_{t_1}^{(1)}$ meet the severed processes from pioneers in the independent infection $\xi_{t_2}^{(1)}$, just as in \cite{lalleysu}. %but with the small detail that we reduce the sets of pioneers in each infection by a factor $\tfrac 12$ to ensure that they use disjoint labels.
%\item[$\rightarrow$] The only required improvement from this point is that the second moment control then has to be written up with probability $1-o(1/k)$ rather than just $1-o(1)$ (and of course we insert the closer estimate on the number of pioneers).
%\end{itemize}
\end{itemize}
\end{remark}

For the infection began at time $i \in [k]$, we need a total time larger than $\frac{1}{c_\lambda}\log N $ to produce a large number of meetings
\[
t_+:=
\frac{\log N + \sqrt{k\log N}}{c_\lambda}
 + i
\]
and both halves of the infection initially grow for a time where all labels are distinct on their individual explored trees w.p. $1-o(1/k)$ (note however that they are exploring the same tree)
\[
t_1:=
\frac{\tfrac{1}{2}\log N - \sqrt{k\log N}}{c_\lambda}
\]
but then the reverse infection must be grown for additional time
\[
\Delta t:=\frac{3}{c_\lambda} \sqrt{k\log N}
\]
to ensure that we have found labels of the other infection, at total time (including the initial waiting period $i$)
\[
t_2:=\frac{\tfrac{1}{2}\log N + 2\sqrt{k\log N}}{c_\lambda}
 + i.
\]

In the end we will control types from the set of $k=\lfloor \log^\epsilon N \rfloor$ infections by a simple union bound, so in the majority of the rest of the proof consider just the forwards infection $\xi$ and one arbitrarily chosen reversed infection $\xi'$ from the surviving set of Lemma \ref{lem_surviving_types}. For this purpose, we need $1-o(\nicefrac{1}{k})$ control of various high probability assumptions, which we will refer to as holding ``with sufficiently high probability'' throughout.

\begin{proof}[Proof of \eqref{case_weak} upper bound]
$\xi$ and $\xi'$ are two independent contact processes on the $d$-regular tree, and for both the root $o$ is projected to $1 \in [N]$. Moreover these explore the same graph and so they share the same $\ell$ of Definition \ref{def_ell}.%, revealing different parts in fact by Proposition \ref{prop_intersection}.

By Markov's inequality and Corollary \ref{cor_union} we can pick a large constant $C>0$ such that
\[
\mathbb{P}\left(
\log \left(
\Bigg|
\bigcup_{s\leq t_1}
\tilde{\xi}_{s}
\Bigg|
\wedge
\Bigg|
\bigcup_{s\leq t_1}
\tilde{\xi}_{s}'
\Bigg|
\right)
\leq 
c_\lambda t_1 + 
C \log \log N \Bigg|
\xi \text{ and } \xi' \text{ survive}
\right) \geq 
1-\frac{1}{\log N}
\]
and at this point by Lemma \ref{lem_blue_tree} every discovered label has been given a unique label by $\ell$, with sufficiently high probability.

Directly from Proposition \ref{cor_contact_growth_lower_bound} we have with sufficiently high probability
\begin{equation}\label{eq_pioneers}
\log \left(
\big|
\pion\tilde{\xi}_{t_1}
\big|
\wedge
\big|
\pion\tilde{\xi}_{t_1}'
\big|
\right)
\geq 
c_\lambda t_1 - 
t_1^{\epsilon}
\end{equation}
and we will finish the path by connecting these pioneers.
First it's easier to say they have distinct labels, and for that Proposition \ref{prop_union_intersection} controls
\[
\log \left(
\Bigg|
\bigcup_{s\leq t_1}
\tilde{\xi}_{s}
\cap
\bigcup_{s\leq t_1}
\tilde{\xi}_{s}'
\Bigg|
\right)
\leq 
\frac{c_\lambda}{2} t_1 + 
C \log \log N.
\]

These unions are necessarily connected sets and so if a pioneer of one is not a pioneer of the other it must be in this set, and discarding a set of size $\sqrt{e^{c_\lambda t}}$ has no tangible effect on the pioneers we controlled in $F_1$.

One by one from each of the remaining pioneers in $\xi'$, construct a further severed exploration $\eta^{(i)}$ for time $\Delta t -2$. The path we look to construct uses the event:
\begin{enumerate}[label={[\arabic*]}]
\item $\eta^{(i)}$ labels are distinct from each other and all previous labels;
\item\label{item_target} in unit time, $j$ infected only one vertex, neither $j$ nor that vertex attempted recovery, and that vertex adjacent to $j$ was labelled $\ell(j)=X$ distinct from all previous $\xi$ labels;
\item\label{item_source} some pioneer of $\eta^{(i)}$ in unit time (after the additional time $\Delta t -2$) also infected only one vertex, neither recovered, and the vertex was also labelled $X$, distinct from all previous $\xi'$ labels.
\end{enumerate}

Looking at the timeframe of the full construction, two contact processes for times $t_1$ and $t_2$ only expect to infect vertices numbering
\[
O(\sqrt{N})\left(
e^{-c_\lambda \sqrt{k\log N}}
+
e^{c_\lambda k + 2\sqrt{k\log N}}
\right)
=
\sqrt{N}
e^{2\sqrt{k\log N}+O(k)}
\]
so we can say with sufficiently high probability they will not infect more than
\[
\sqrt{N}
e^{3\sqrt{k\log N}}
\]
and hence at any point in the exploration, a sampled half-edge belongs to a new vertex independently with probability at least
\[
1-
\frac{1}{\sqrt{N}}
e^{4\sqrt{k\log N}}.
\]

Separately, an individual severed exploration $\eta^{(i)}$ has expected size less than $O(e^{3\sqrt{k\log N}})$ and so will never exceed $e^{4\sqrt{k\log N}}$. Thus the exploration at $i$ produces a repeated label, by Markov's inequality again, with probability bounded by
\[
\frac{1}{\sqrt{N}}
e^{8\sqrt{k\log N}}.
\]

Now consider all the targets of item \ref{item_target} together. From \eqref{eq_pioneers} we have at least $e^{c_\lambda t_1 - t_1^{\epsilon}}$ of them, and we ask for a positive probability event in unit time. So, with sufficiently high probability,
\begin{equation}\label{eq_xj}
|X_J|=
\Omega\left(
e^{c_\lambda t_1 - 
t_1^{\epsilon}}
\right)
\end{equation}
of these forwards infection pioneers $j$ infect a single neighbour and it is given some label distinct from all other observed labels and recorded in $X_J$.

In the other direction, everything except the severed infections of $\pion\tilde{\xi}_{t_1}$ is discarded: we only look for infection paths on the finite graph that follow this route, and if we ignore infection elsewhere it cannot cause an issue by the monotonicity of the contact process. Moreover we are free to put labels of $\ell$ in any convenient order: any possible spanning tree is a coupling of the infection. This separation also makes it clear that we can explore each $\eta^{(i)}$ one-by-one and do not need to worry about the simultaneous timeframes of different infections (see also Remark \ref{rem_synchronicity}).

In this way we see \emph{i.i.d.} severed explorations $\eta^{(i)}$, and by Proposition \ref{prop_sev_growth_lower_bound}
\[
\p\left(
| \pion \eta^{(i)}_{\Delta t-2} | \leq e^{\tfrac{5}{2}\sqrt{k\log N} }
\, \bigg| \,
\eta^{(i)}_{\Delta t-2} \neq\emptyset
\right)
\leq e^{-(k\log N)^{\Omega(1)}}.
\]

For each severed exploration, we have a positive probability event where the whole set $\eta^{(i)}_{\Delta t-2}$ is truly infected, and a positive proportion $\Omega(e^{\tfrac{5}{2}\sqrt{k\log N} })$ of these pioneers infect a single new vertex each. Thus looking at all the $i$ together with each gives a binomial number of these events, with sufficiently high probability the number of valid labels of item \ref{item_source} is therefore
\begin{equation}\label{eq_xi}
|X_I|=\Omega(e^{\tfrac{5}{2}\sqrt{k\log N} }) \cdot \Omega\left(
e^{c_\lambda t_1 - 
t_1^{\epsilon}}
\right).
\end{equation}

We have constructed  a sufficiently high probability event where the two sets $X_I$ and $X_J$ of labels have sizes \eqref{eq_xi} and \eqref{eq_xj}. Because these draws are approximately uniform (if desired they could be thinned to a set of the same order that was drawn with equal probability), we fail conditionally to see a shared label with the hypergeometric probability
\[
\exp\left(
-\Omega
\left(
\frac{|X_I|\cdot |X_J|}{N}
\right)
\right)
=
e^{
-
e^{\frac{1}{2}\sqrt{k \log N}-O(k)}
}
=o\left(\frac{1}{k}\right)
\]
and so with sufficiently high probability there is a shared label.
\end{proof}

\section*{Acknowledgements}

Thanks to Emmanuel Jacob for many useful discussions.% in the initial stages of this work.

\bibliography{bib}

\end{document}